\newtheorem{thm}{Theorem}[section]
\newtheorem{lem}[thm]{Lemma}
\newtheorem{prop}[thm]{Proposition}
\theoremstyle{definition}
\theoremstyle{remark}
\newtheorem{rmk}[thm]{Remark}
\numberwithin{equation}{section}
\DeclareMathOperator{\supp}{supp}
\DeclareMathOperator{\dist}{dist}
\title[Restriction estimate for hyperbolic surfaces]{Improved restriction estimate for hyperbolic surfaces in $\mathbb R^3$}
\author{Chu-Hee Cho and Jungjin Lee}
\address{School of Mathematical Sciences, Seoul National University, Seoul 08826, Republic of Korea}
\email{akilus@snu.ac.kr}
\address{Department of Mathematical Sciences, School of Natural Science, Ulsan National Institute of Science and Technology, UNIST-gil 50, Ulsan 44919,
	Republic of Korea}
\email{jungjinlee@unist.ac.kr}
\date{\today}
\thanks{2010 \textit{Mathematics Subject Classification}: Primary 42B20}
\keywords{}
\begin{document}

\begin{abstract}
Recently, L. Guth improved the restriction estimate for the surfaces with strictly positive Gaussian curvature in $\mathbb R^3$. In this paper we extend his restriction estimate to the surfaces with strictly negative Gaussian curvature.  	
\end{abstract}
\maketitle
	
\section{Introduction}
Let $S$ be a smooth compact hypersurface with boundary in $\mathbb R^d$, which has a surface measure $d\sigma$. The Fourier transform of the measure $fd\sigma$ is written as
\[ 
\widehat{f d\sigma}(x) = \int_S e^{2 \pi ix \cdot w} f(w) d\sigma(w). 
\]
The \textit{restriction problem} posed by Stein \cite{stein1979some} is to find $(p,q)$ for which the \textit{adjoint restriction estimate}
\begin{equation} \label{ReaEst}
\| \widehat{f d\sigma} \|_{q}  \le C \|f\|_{L^p(S)}
\end{equation}
holds for all $f \in C_c^{\infty}(S)$, where the constant $C$ may depend on $p,q,d, S$ but not on $f$. 
This problem is connected to questions about the convergence of Fourier summation methods such as the Bochner-Riesz conjecture and local smoothing conjecture. Also, there is a fundamental relation between the restriction problem and the Kakeya problem. Moreover, the restriction problem is associated with the analysis of linear PDE such as the Helmholtz equation, Schr\"odinger equation, wave equation and the Korteweg-de Vries equation. See \cites{tao2001rotating, bourgain2000harmonic }.

For several decades, a fair amount of work was devoted to this problem (particularly when $S$ is an elliptic surface such as the unit sphere and paraboloid). After Bourgain \cite{bourga1991besicovitch} combined a multiscale analysis approach with his Kakeya estimate, Bourgain's methods were developed over the years; see \cites{moyua1999restriction, tao1998bilinear, tao2000bilinearI}. Especially, from the analysis of $L^2$ bilinear variants of the problem, Wolff \cite{wolff2001sharp} and Tao \cite{tao2003sharp} obtained the \textit{$L^2$ bilinear restriction theorem} for the cones and  paraboloids respectively, which made a significant progress on the restriction problems. On the other hand,  Bennet, Carbery and Tao \cite{bennett2006multilinear}, using the heat-flow method, obtained the \textit{multilinear Kakeya theorem} and  the \textit{multilinear restriction theorem}. 
(Later, Guth \cites{guth2010endpoint, guth2015short} gave an alternative proof of the multilinear Kakeya theorem.) 
After several years, Bourgain and Guth \cite{bourgain2011bounds} found a new way to apply the multilinear restriction theorem to the restriction problem, and they obtained some improvements. Recently, Guth \cite{guth2015restriction} further developed it in $\mathbb R^3$ by adapting the \textit{polynomial partitioning}. (It is a method that has brought some important results about overlapping lines in incidence geometry; see \cites{guth2010erdos, kaplan2012simple}.)  

In \cite{guth2015restriction}, Guth considered the restriction estimate for surfaces with strictly positive Gaussian curvature.
The aim of this paper is to extend Guth's restriction estimate to the case of quadratic surfaces with strictly negative Gaussian curvature in $\mathbb R^{3}$. 
The following is our main result.

\begin{thm} \label{thm:MM}
Let $S$ be a compact quadratic surface with strictly negative Gaussian curvature in $\mathbb R^3$. Then, for $p > 3.25$ and $p=q$, the estimate \eqref{ReaEst} is valid.
\end{thm}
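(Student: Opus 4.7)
The plan is to adapt Guth's polynomial partitioning argument from \cite{guth2015restriction} to hyperbolic surfaces. Any compact quadratic surface with strictly negative Gaussian curvature is affinely equivalent to a piece of the hyperbolic paraboloid $z = x^2 - y^2$, and the restriction estimate is invariant under the induced linear transformations in frequency, so we reduce to studying the extension operator $Ef(\xi) = \int e^{i(\xi_1 x + \xi_2 y + \xi_3(x^2 - y^2))} f(x,y)\,dx\,dy$ with $f$ supported on $[-1,1]^2$. The goal is to establish, by induction on the spatial scale $R$, a local estimate of the form $\|Ef\|_{L^q(B_R)} \lesssim R^{\epsilon}\|f\|_\infty$ for every $q > 13/4$; the full range of $(p,q)$ then follows by interpolation with the trivial $L^1 \to L^\infty$ bound and an $\epsilon$-removal argument.

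The inductive step follows Guth's scheme. First, decompose $f = \sum_\theta f_\theta$ into wave packets with $\theta$ a cover of $[-1,1]^2$ by $R^{-1/2}$-caps, so that each $Ef_\theta$ is concentrated on tubes of dimensions $R^{1/2}\times R^{1/2}\times R$ normal to the surface over $\theta$. Next, apply polynomial partitioning: choose a polynomial $P$ of bounded degree $D$ (a small power of $R^\epsilon$) such that $Z(P)$ divides $B_R$ into $\sim D^3$ cells carrying equal shares of $\int|Ef|^q$. In the \emph{cellular} case, where the bulk of the $L^q$ mass lies in cells away from a small neighbourhood of $Z(P)$, each wave packet visits at most $O(D)$ cells, and pigeonholing together with the inductive hypothesis closes the estimate. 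In the \emph{algebraic} case, where the mass concentrates in the $R^{1/2+\delta}$-neighbourhood of $Z(P)$, one further partitions the tubes according to whether they are transverse or tangential to $Z(P)$: the transverse contribution is handled through a trilinear restriction estimate in the spirit of \cite{bennett2006multilinear}, while the tangential tubes are few at each point by the algebraic geometry of $Z(P)$, and the argument is iterated at a thinner scale.

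The main obstacle, which has no analogue in the elliptic setting, is the presence of the two rulings $\{x+y = c\}$ and $\{x-y = c\}$ on the hyperbolic paraboloid: the Gauss map $(x,y)\mapsto(2x,-2y,-1)$ can send widely separated caps to nearly parallel normals, so the transversality hypothesis required by the multilinear restriction theorem is no longer automatic from mere cap separation. Consequently the trilinear estimate must be formulated with an explicit quantitative transversality assumption on the wedge products of the three normals, and the triples (or pairs) of caps that violate this hypothesis---those clustering along a common null direction---must be controlled separately. The chief technical difficulty will be to show that these almost-parallel contributions fit cleanly into the polynomial partitioning iteration without forfeiting the full exponent $q > 13/4$; I expect that it requires a careful decomposition of the tangential tubes near $Z(P)$ according to their angle with each ruling, together with an adapted bilinear $L^2$ restriction estimate of Tao--Wolff type valid in the hyperbolic setting.
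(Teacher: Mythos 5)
You correctly locate the central obstruction---the two rulings of the hyperbolic paraboloid make the Gauss map degenerate along null directions, so cap separation alone no longer buys the transversality one needs---but your plan for circumventing it is both vague and misaligned with how the wall estimate actually works. Two specific problems. First, you have the transverse/tangential roles reversed: in Guth's scheme (and in the paper) the \emph{transversal} tubes are absorbed by the induction on scale (each transversal tube belongs to at most $R^{O(\delta_1)}$ of the $B_j$-indexed families, which is Lemma \ref{lem:geo1}), while the \emph{tangential} tubes, which lie near the two-dimensional variety $Z(P)$ and point in only $R^{1/2+O(\delta)}$ directions, are the ones handled by a C\'ordoba-style bilinear $L^2$ estimate. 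No trilinear restriction theorem is applied at the wall; the trilinearity in the elliptic case enters upstream through the broad/narrow dichotomy, not here.

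Second, and more importantly, your proposed fix---"a quantitative transversality hypothesis on wedge products" with separate treatment of degenerate triples---leaves precisely the hard part unresolved: what do you \emph{do} with a point where the mass concentrates on caps whose frequency supports all lie in a $K^{-1}$-strip parallel to a ruling? The paper's key new idea is to build this into the definition of the broad function itself: a point is $\alpha$-broad only if $Ef(x)$ is dominated neither by a single cap \emph{nor} by the sum over caps in any axis-parallel strip $L_{\parallel e_1}$ or $L_{\parallel e_2}$. The strip contributions are then treated like the cap contributions in the narrow part---a parabolic rescaling $(\xi_1,\xi_2)\mapsto(\xi_1,K^{-1}\xi_2)$ sends the strip onto the full domain (inequality \eqref{eqn:sumtau}), and the exponent $K^{-1+2/p_0}$ is summable over the $\sim K$ strips because $p_0>3$. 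With strips excluded, a broad point automatically yields two caps $\tau_1,\tau_2$ with $L(\tau_1,\tau_2)$ nonparallel to both axes, which is exactly the separation condition of Lee and Vargas under which the bilinear estimate \eqref{bilEST1} is valid for the hyperbolic paraboloid. Without an analogue of this mechanism---a concrete, rescalable way to kill the strip-parallel mass---your trilinear variant would have nothing to charge the degenerate triples to, and the induction would not close. So there is a real gap: you diagnose the disease but do not propose the cure that makes the inductive bookkeeping go through.
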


Stein \cite{stein1986oscillatory} verified that the  estimate \eqref{ReaEst} holds for $q \ge 4$ and $\frac{2}{q} \le 1- \frac{1}{p}$. 
The best previously known result due to Lee \cite{lee2006bilinear} and Vargas \cite{vargas2005restriction} was $q> 10/3$ and $\frac{2}{q} < 1- \frac{1}{p}$. 
By interpolating Theorem \ref{thm:MM} with the previous result, the $(p,q)$-range is extended to $q>3.25$, $\frac{3}{q} < 1- \frac{1}{4p} $ and $\frac{2}{q} < 1- \frac{1}{p}$.

Our proof is based on Guth's arguments in \cite{guth2015restriction}. The key ingredients in his arguments are a \textit{broad function, polynomial partitioning, induction} and \textit{bilinear estimate}. 
Roughly speaking, the polynomial partitioning and induction are used to reduce a 3-dimensional restriction problem to an essentially 2-dimensional one. The broad function is exploited for a bilinear approach to the derived 2-dimensional problem.
We will modify the definition of broad function and the related bilinear estimates. 
As mentioned in \cite{lee2006bilinear} and \cite{vargas2005restriction}, we need a stronger separation condition to obtain bilinear restriction estimates for hyperbolic surfaces than that for elliptic ones. Accordingly, our broad function will be defined to involve such strong separation condition. Then, it is possible to have the same bilinear estimates as in \cite{guth2015restriction}.

The paper is organized as follows. In next section, we prepare the proof of our result by giving an elementary proposition about a wave packet decomposition. In section 3, we define a broad function, and reduce a Fourier restricted function to its broad function. In section 4, we prove the main part. We trim down the problem by using a polynomial partitioning and induction arguments, and then we bilinearly approach the remaining part. 

Throughout the paper we use $C$ to denote positive constants $\ge 1$ which may be different at each occurrence. We denote $A \lesssim B$ or $A=O(B)$ to mean $A \le CB$, and $A \sim B$ to mean $C^{-1}B \le A \le CB$. We denote the number of members of a set $A$ by $\# A$.

\section{Wave packet decomposition}
In this section we recall a wave packet decomposition which has been a fundamental tool in restriction problems.

By a suitable translation and linear transformation we may set  $S$ as the hyperbolic paraboloid defined by
\[
S = \{ (\xi_1, \xi_2, \xi_1 \xi_2) \in \mathbb R^3:~ (\xi_1, \xi_2) \in D(1) \},
\]
where $D(1)$ is the unit square centered at the origin. Let us define the extension operator $Ef$ by
\[
E f(x) = \int_S e^{2 \pi i x \cdot \xi} f(\xi) d\sigma(\xi)
\sim \int_{D(1)} e^{2 \pi i(x_1\xi_1+x_2\xi_2+x_3 \xi_1\xi_2)} \tilde f(\xi_1,\xi_2) d \xi_1 d\xi_2
\]
where $\tilde f(\xi_1,\xi_2) = f(\xi_1,\xi_2,\xi_1\xi_2) \frac{d\sigma(\xi_1,\xi_2)}{d\xi_1 d\xi_2}$.

We decompose $S$ into caps \( \Omega \) of diameter \( R^{-1/2} \). 
Let \( n(\Omega) \) be the unit normal vector to \( S \) at the center of $\Omega$. Let \( \delta>0 \) be a small parameter. For each cap \( \Omega \), we define \( \mathbb T(\Omega) \) to be the set of cylindrical tubes $T$ of radius \( R^{1/2 + \delta} \) which are parallel to \(  n(\Omega) \) and cover  a ball $B(R)$ of radius $R > 1$ with finite overlap. 
If $T \in \mathbb T(\Omega)$ then $v(T)$ indicates $n(\Omega)$, and  $\omega(T)$ denotes the center of $\Omega$. We define \( \mathbb T = \bigcup_{\Omega} \mathbb T(\Omega) \). 

We use the following standard wave packet decomposition.
This is a simple modification of Proposition 2.6 in \cite{guth2015restriction}. (We can find a similar decomposition in \cite{lee2006bilinear}*{Lemma 2.2} and in \cite{vargas2005restriction}*{section 3}.) 
\begin{prop}[Wave packet decomposition] \label{prop:wavepack}
Let \( R \gg 1 \) and let $B(R)$ be a ball of radius $R$. If \( f \in L^2(S) \), then for each tube \( T \in \mathbb T \) there exists a function \( f_T \) satisfies the following conditions :
\begin{enumerate}
\item 
If \( T \in \mathbb T(\Omega) \) then \( \supp f_T \subset 3\Omega. \)
\item 
If \( x \in B(R) \setminus T \)
then \( |E f_T(x)| \le R^{-1000}\|f\|_{L^2(S)} \).
\item 
For any \( x \in B(R) \), \( | E f(x) - \sum_{T \in \mathbb T} E f_T(x)| \le R^{-1000}\|f\|_{L^2(S)} .\)
\item 
If \( T_1, T_2 \in \mathbb T(\Omega) \) and \( T_1,T_2 \) are disjoint, then \( \int f_{T_1} \bar f_{T_2} \le R^{-1000}\int_{\Omega} |f|^2 \).
\item 
\( \sum_{T \in \mathbb T(\Omega)} \int_S |f_T|^2 \lesssim \int_\Omega |f|^2\).
\item
Let $\tau \subset S$ be a cap of radius $> 10R^{-1/2}$ and $f_{\tau} := f \chi_\tau$. Then for any $\mathbb T' \subset \mathbb T$ and any $\omega \in S$,
\[
\Big\| \sum_{T \in \mathbb T': \omega(T) \in \tau} f_T \Big\|_{L^2(B(\omega,R^{-1/2}) \cap S)} \lesssim \| f_{\tau} \|_{L^2(10B(\omega,R^{-1/2}) \cap S)}.
\]
\end{enumerate}
\end{prop}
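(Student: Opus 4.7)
My plan is to adapt the standard Fourier-series wave packet construction to the hyperbolic parametrization $\Phi(\xi_1,\xi_2)=(\xi_1,\xi_2,\xi_1\xi_2)$. The signature of the Hessian of $\Phi$ should play no role in the decomposition itself, since all six properties depend only on the smoothness of $\Phi$ and on $n(\Omega)$ being the correct unit normal; the construction from \cite{guth2015restriction} should transfer essentially unchanged. I would first fix a smooth partition of unity $\{\eta_\Omega\}$ on $D(1)$ subordinate to the caps, with $\supp\eta_\Omega\subset 2\Omega$ and $\sum_\Omega\eta_\Omega=1$, together with fatter bumps $\psi_\Omega\equiv 1$ on $\supp\eta_\Omega$ and supported in $3\Omega$.

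For each cap I enclose $\supp(\eta_\Omega\tilde f)$ in a box $B_\Omega\subset\mathbb R^2$ of side $\sim R^{-1/2-\delta}$ centered at the projection $\xi_\Omega$ of the cap's center, and expand
\[
(\eta_\Omega\tilde f)(\xi)=\sum_{k\in L_\Omega} c_k\,e^{2\pi i k\cdot(\xi-\xi_\Omega)},\qquad \xi\in B_\Omega,
\]
with $L_\Omega$ the dual lattice of spacing $\sim R^{1/2+\delta}$, and match each $k\in L_\Omega$ to a tube $T\in\mathbb T(\Omega)$ whose axis passes through the corresponding translate of $k$ in the plane normal to $n(\Omega)$. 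I then set $f_T(\xi):=c_k\,\psi_\Omega(\xi)\,e^{2\pi i k\cdot(\xi-\xi_\Omega)}$. Property (1) is automatic from $\supp\psi_\Omega\subset 3\Omega$; property (3) holds because $\sum_{T\in\mathbb T(\Omega)}f_T=\eta_\Omega\tilde f$ on $B_\Omega$ and the dual period cell has diameter $\gtrsim R^{1+\delta}\gg R$, so Poisson aliasing contributes only $O(R^{-1000})$ inside $B(R)$; properties (4) and (5) follow from orthogonality of the $\{e^{2\pi i k\cdot(\xi-\xi_\Omega)}\}_k$ and Bessel's inequality on $B_\Omega$, together with rapid decay of the Fourier coefficients of the smooth envelope $\psi_\Omega^2$ to supply the $R^{-1000}$ defect in (4); and (6) follows by writing $\sum_{T\in\mathbb T'}f_T$ as a smoothly truncated Fourier projection of $\eta_\Omega\tilde f$ and bounding the resulting convolution kernel in $L^1$.

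The main obstacle I expect is property (2), the tube-concentration estimate. For $T$ attached to $k\in L_\Omega$ the wave packet is
\[
Ef_T(x)=c_k\int_{\mathbb R^2}e^{2\pi i\left[x\cdot\Phi(\xi)+k\cdot(\xi-\xi_\Omega)\right]}\psi_\Omega(\xi)\,d\xi,
\]
whose phase, after Taylor-expanding $\Phi$ around $\xi_\Omega$, is stationary in $\xi$ precisely along the axis of $T$. When $x\in B(R)\setminus T$, at least one tangential $\xi$-derivative of the phase exceeds $R^\delta$ throughout $\supp\psi_\Omega$, and repeated non-stationary-phase integration by parts in the two tangent directions---using that tangential derivatives of $\psi_\Omega$ grow only polynomially in $R^{1/2}$---yields the required $R^{-1000}$ decay. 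The saddle Hessian of $\xi_1\xi_2$ rearranges the global level sets of the phase but leaves this local tangential integration-by-parts argument intact, since the tangent plane to $S$ at $\xi_\Omega$ is well defined regardless of the sign of the curvature. This is where the $R^\delta$ slack in the tube radius is spent, and it is the only step of Proposition \ref{prop:wavepack} that genuinely needs to be checked, rather than quoted, in the hyperbolic setting.
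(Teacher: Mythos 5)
Your overall strategy---a Fourier-series wave packet decomposition rather than the convolution construction the paper actually uses---is a legitimate alternative, and you are right that nothing in the construction cares about the sign of the Gaussian curvature: the paper's appendix indeed transcribes Guth's proof verbatim, merely replacing the elliptic parametrization $(\xi_1,\xi_2,|\xi|^2)$ by $(\xi_1,\xi_2,\xi_1\xi_2)$ and carrying the Jacobian of $d\sigma$ along. The paper's route is to fix a partition of unity $\{\phi_T\}$ subordinate to a cover of $\mathbb{R}^2$ by discs of radius $R^{1/2+\delta}$ in physical space, set $\tilde f_T = \psi_\Omega(\widehat{\phi_T}\ast\tilde f_\Omega)$, and then obtain (2) from the rapid decay of $\Psi_{x_3}^{\vee}$ where $\Psi_{x_3}(\xi)=e^{ix_3\xi_1\xi_2}\psi_\Omega(\xi)$; your non-stationary-phase argument for (2) is a correct alternative derivation of the same estimate, and both exploit exactly the slack $R^{\delta}$ in the tube radius.

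However, your construction has a scale mismatch that breaks properties (3)--(5) as written. You take $B_\Omega$ of side $\sim R^{-1/2-\delta}$ so that the dual lattice has spacing $\sim R^{1/2+\delta}$ (matching the spacing of tubes in $\mathbb{T}(\Omega)$), but the caps $\Omega$ have diameter $R^{-1/2}$, so $\supp(\eta_\Omega\tilde f)\subset 2\Omega$ has diameter $\sim R^{-1/2}\gg R^{-1/2-\delta}$ and is \emph{not} contained in $B_\Omega$. The Fourier series on $B_\Omega$ therefore reconstructs only the restriction of $\eta_\Omega\tilde f$ to $B_\Omega$ (periodized), and the portion of $\eta_\Omega\tilde f$ on $2\Omega\setminus B_\Omega$ is simply lost---this is not a $R^{-1000}$ aliasing error but an $O(1)$ loss, so (3) fails, and the Bessel/orthogonality arguments for (4), (5) are carried out on the wrong domain. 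To make the Fourier-series route work you need $B_\Omega$ of side $\sim R^{-1/2}$ (a box genuinely enclosing $2\Omega$), giving lattice spacing $\sim R^{1/2}$; the resulting wave packets are then \emph{finer} than the $R^{1/2+\delta}$-spaced tubes of $\mathbb{T}(\Omega)$, and one must add a grouping step, assigning the $\sim R^{2\delta}$ lattice points that fall in a given $T$ to that tube and defining $f_T$ as the corresponding sub-sum. That extra step (or the paper's convolution device, which avoids it) is what you are missing.
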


\noindent The proof will be given in Appendix.
	
\section{Reduction and the broad function}
In this section we reduce the restriction estimate to a problem of obtaining good localized estimates for some regularized (adjoint) restriction operator.  

As in \cite{bourga1991besicovitch},
by the Stein-Nikishin factorization theorem, it suffices to show \eqref{ReaEst} for $q > 3.25$ and $p=\infty$. Furthermore, by Tao's \( \epsilon \)-removal lemma it is reduced to showing the following:

\begin{thm}\label{main theorem}
Let $p_0 = 3.25$. For any $R \ge 1$,  the estimate
\begin{equation}\label{main estimate}
\|E f \|_{L^{p_0}(B(R))} \le C_{\epsilon} R^\epsilon \|f\|_{L^\infty(S)}
\end{equation}
is valid for all $f$ on $S$, all $0 <\epsilon \ll 1$ and all ball $B(R)$ of radius $R$. 
\end{thm}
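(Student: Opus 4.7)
The plan is to prove Theorem \ref{main theorem} by adapting Guth's scheme \cite{guth2015restriction} for elliptic surfaces to the saddle-shaped hyperbolic paraboloid. The strategy has three ingredients: (i) a broad/narrow decomposition that isolates the part of $Ef$ whose wave packets have essentially $2$-dimensional normal directions, (ii) a polynomial partitioning induction on the radius $R$ that either yields a cellular gain or concentrates mass near an algebraic variety, and (iii) the bilinear restriction estimate for the hyperbolic paraboloid due to Lee \cite{lee2006bilinear} and Vargas \cite{vargas2005restriction} as the base case. The novelty compared with the elliptic case is forced by the rulings on $S$: two caps on a hyperbolic paraboloid can have Euclidean-transverse tangent planes yet still lie on a common line in $S$, and the Lee--Vargas bilinear hypothesis excludes exactly this degenerate configuration.

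First, following Section 3, I introduce a $k$-broad norm $\mathrm{BL}^p_{k,A}(Ef,B(R))$ that records, on each ball of radius $K^2$, only the contributions from wave packets whose normals span in at least $k$ essentially different directions. The Bourgain--Guth bilinear-to-broad scheme then reduces \eqref{main estimate} to proving, for $p_0 = 3.25$ and all small $\epsilon$,
\[
\|Ef\|_{\mathrm{BL}^{p_0}_{2,A}(B(R))} \le C_\epsilon R^\epsilon \|f\|_{L^\infty(S)},
\]
provided $f$ has no dominant ruling frequency; the latter hypothesis is precisely what Section 3 secures, and it will allow the Lee--Vargas bilinear estimate to be invoked on pairs of caps that are not aligned along a common ruling of $S$.

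Next, I prove the broad-norm bound by induction on $R$ (and on $\|f\|_{L^\infty}$). Given $f$, I apply polynomial partitioning to $|Ef|^{p_0}$ at a degree $D$ chosen at the end, producing either a \emph{cellular case}, in which the broad norm is captured by $\sim D^3$ cells of diameter $R/D$, or an \emph{algebraic case}, in which the broad norm concentrates in the $R^{1/2+\delta}$-neighbourhood of $Z(P)$. In the cellular case, I use the wave packet decomposition of Proposition \ref{prop:wavepack}: each tube enters at most $D+1$ cells and the $L^2$ almost-orthogonality property (4)--(5) controls $\sum_T \|f_{T,\mathrm{cell}}\|_{L^\infty}^2$ on each cell, so applying the inductive hypothesis at scale $R/D$ on each cell gives an extra factor of $D^{3-p_0/2}$, which for $p_0 = 3.25$ is small enough to close the induction against a loss $D^{O(1)}$.

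In the algebraic case, I split the tubes into \emph{transverse} tubes (that meet the $R^{1/2+\delta}$-neighbourhood of $Z(P)$ in $\lesssim D$ sub-balls of radius $R^{1-\delta}$), and \emph{tangential} tubes (that stay within angle $R^{-1/2+\delta}$ of $Z(P)$ throughout). Transverse tubes induct on $R^{1-\delta}$. For the tangential tubes, each wave packet is confined to a two-dimensional algebraic strip, and the $2$-broad condition forces pairs of contributing strips at transverse normals; here I invoke the Lee--Vargas bilinear estimate for the hyperbolic paraboloid, whose separation hypothesis is satisfied thanks to the no-dominant-frequency reduction. The main obstacle I anticipate is exactly this tangential subcase: a priori, tangential wave packets may lie along the two families of rulings of $S$ and fail the bilinear transversality needed. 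Checking that the broad norm together with the Section 3 reduction rules this out, and then balancing the cellular, transverse, and tangential losses to single out the exponent $p_0 = 3.25$, is where the hyperbolic case departs from Guth's elliptic argument.
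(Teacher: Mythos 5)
Your plan is essentially the same as the paper's: reduce to a broad-norm estimate via parabolic rescaling and induction on $R$, run polynomial partitioning to split into cellular and algebraic cases, further split the algebraic case into transverse and tangential tubes, and close the tangential case with a Lee--Vargas-type bilinear estimate whose transversality hypothesis is secured by the broad/narrow reduction. You also correctly flag the central new difficulty of the hyperbolic case --- two caps on $S$ can have transverse normals yet lie on a common ruling, which Euclidean $2$-broadness alone does not exclude.

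The one place where you should be more precise, because this is exactly the paper's novelty, is the mechanism that enforces ruling separation. You describe it as a ``no dominant ruling frequency'' hypothesis secured by the Section~3 reduction and invoke a $k$-broad norm $\mathrm{BL}^p_{k,A}$ that counts ``essentially different directions.'' In that form the device does not yet close: the $2$-broad norm of the elliptic argument only rules out domination by a single $K^{-1}$-cap $\tau$, and it would still let through a pair of caps with transverse normals that sit on one ruling, precisely the configuration where the Lee--Vargas bilinear estimate fails. What the paper actually does is build the strip condition directly into the definition of the $\alpha$-broad function: a point $x$ is $\alpha$-broad only if, in addition to $\max_\tau|Ef_\tau(x)|\le\alpha|Ef(x)|$, one also has $\max_L|\sum_{\tau:\bar\tau\cap L\ne\emptyset}Ef_\tau(x)|\le\alpha|Ef(x)|$ over all strips $L$ of width $K^{-1}$ parallel to $e_1$ or $e_2$. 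The narrow contribution from these strips is then handled by the one-parameter parabolic rescaling \eqref{eqn:sumtau}, with loss $K^{-1+2/p_0}$, which is why $p_0>3$ is forced even before the cellular/tangential balancing. On the broad side, the consequence (made explicit in Lemma~\ref{lem:Maindecomp}) is that the bilinear term involves only pairs $(\tau_1,\tau_2)$ with $L(\tau_1,\tau_2)$ nonparallel to the axes, i.e.\ separated in both coordinate projections, which is exactly the separation that both Lemma~\ref{lem:Cord} (bilinear $L^2$ orthogonality) and the $\|d\sigma_{3\Omega_1}*d\sigma_{3\Omega_2}\|_\infty\lesssim R^{-1/2}$ kernel bound in Lemma~\ref{sppE} require. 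If you want to use the $\mathrm{BL}^p_{k,A}$ language instead, you must augment its definition with the same strip maxima; otherwise the tangential subcase has a genuine gap. The rest of your outline --- wave-packet $L^2$ bookkeeping via Proposition~\ref{prop:wavepack}(4)--(5), the $\lesssim D$ cell-crossing count, the $R^{1-\delta}$ induction on transverse tubes, and the interpolation of the $L^4$ bilinear bound with the trivial $L^2$ bound to land at $p_0=13/4$ --- matches the paper.
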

By translation invariance we may assume that $B(R)$ is centered at the origin.

Fix $R \gg 1$; in the case $R \sim 1$, it is easy to see \eqref{main estimate}. 
First, we take a large dyadic number \( K=K(\epsilon) \) with $\lim_{\epsilon \rightarrow 0} K(\epsilon) = \infty$ (we may set $K \sim e^{\epsilon^{-10}}$).
We divide $D(1)$ into $K^2$ squares $\bar \tau$ of sidelength $K^{-1}$ whose sides are parallel to standard unit vectors $e_1$ and $e_2$.
Let $L_{\parallel e_1}$ denote the $K$ strips of width $K^{-1}$ such that their center lines are parallel to $e_1 \in \mathbb R^{2}$ and they are composed of the squares $\bar \tau$.   $L_{\parallel e_2}$ are similar strips but their center lines are parallel to $e_2$.  Let $\tau := \{  (\xi_1,\xi_2,\xi_1\xi_2): (\xi_1,\xi_2) \in \bar\tau \}$. Then the surface $S$ is covered by the \( K^2 \) caps \( \tau \) of diameter \( \sim K^{-1} \). 
Set \( f_\tau = \chi_\tau f\). 

For $\alpha \in (0,1)$, we define an \textit{$\alpha$-broad point} of $Ef$ to be the point $x$ at which 
\[
\max_{\tau} |Ef_\tau(x)| + \max_{L = L_{\parallel e_1} \text{ or } L_{\parallel e_2 }} \Big| \sum_{\tau: \bar\tau \subset L }Ef_\tau(x) \Big|  \le \alpha |Ef(x)|.
\]
If $A_{\alpha}$ is the set of all $\alpha$-broad points of $Ef$, then we define an \textit{$\alpha$-broad function} \( \mathbf{B}_\alpha[Ef]\) by
$
\mathbf{B}_\alpha[Ef](x) = Ef(x)\chi_{A_\alpha}(x).
$
Then for given \( x \in B(R) \), there exist $\tau$ and $L$ such that
\[
|E f(x)| \le |\mathbf{B}_{\alpha} [Ef](x)| + \alpha^{-1} \Big( |Ef_\tau(x)| + \Big|\sum_{\tau: \bar\tau \subset L } Ef_{\tau}(x)\Big|\Big).
\]
From this we have that for any $x \in B(R)$,
\[
|E f(x)|^{p_0} \lesssim |\mathbf{B}_{\alpha} [Ef](x)|^{p_0} + \alpha^{-p_0} \Big( \sum_{\tau} |Ef_\tau(x)|^{p_0} + \sum_{L} \Big|\sum_{\tau: \bar\tau \subset L } Ef_{\tau}(x)\Big|^{p_0} \Big).
\]
By integrating over $B(R)$, 
\begin{equation} \label{eqn:star}
\begin{split}
\int_{B(R)} 
|E f(x)|^{p_0} &\lesssim \int_{B(R)} |\mathbf{B}_{\alpha} [Ef](x)|^{p_0} \\
&\quad + \alpha^{-p_0} \Big(   \sum_{\tau} \int_{B(R)} |Ef_{\tau}(x)|^{p_0}  + \sum_{L = L_{\parallel e_1} \text{ or } L_{\parallel e_2 }} \int_{B(R)} |\sum_{\tau: \bar\tau \subset L } Ef_{\tau}(x)|^{p_0} \Big).
\end{split}
\end{equation}

We first deal with the summation parts of the above inequality.
For this we use an inductive argument on $R$; we assume that for any $1 \le r \le R/2$ , the estimate \eqref{main estimate} holds for all $f$, all $0< \epsilon \ll 1$ and all balls $B(R)$.

By using the induction hypothesis we can prove the following estimates by scaling.
\begin{lem}
\begin{align} \label{eqn:tau}
\| E f_\tau \|_{L^{p_0}(B_{R})} &\le CC_\epsilon K^{-2+\frac{4}{p_0}} R^{\epsilon} \| f_\tau \|_{L^\infty(S)},\\
\label{eqn:sumtau}
\Big\| \sum_{\tau: \bar\tau \subset L } E f_\tau\Big\|_{L^{p_0}(B_{R})} &\le CC_\epsilon K^{-1+\frac{2}{p_0}} R^{\epsilon} \Big\| \sum_{\tau: \bar\tau \subset L } f_\tau \Big\|_{L^\infty(S)}.
\end{align}
\end{lem}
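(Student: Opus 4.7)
The plan is to prove both estimates by parabolic rescaling adapted to the hyperbolic paraboloid, followed by applying the induction hypothesis on a strictly smaller ball. Both arguments share the same shape: perform an affine change of variables $x\mapsto y$ that converts $Ef_\tau$ (respectively $E(\sum_{\tau:\bar\tau\cap L\neq\emptyset}f_\tau)$) into a power of $K$ times the extension operator $Eg$ of a rescaled function $g$ supported in $D(1)$, account for the Jacobian, and then invoke the induction hypothesis on the image $y(B_R)$.

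For the cap, I would write its center as $(a_1,a_2,a_1a_2)$ and parametrize $\xi=(a_1+K^{-1}\eta_1,\,a_2+K^{-1}\eta_2)$ with $\eta\in D(1)$. Expanding $\xi_1\xi_2$ and regrouping the phase $x\cdot(\xi_1,\xi_2,\xi_1\xi_2)$ in powers of $\eta$ shows that, after the change of variables
\[
y_1=K^{-1}(x_1+x_3a_2),\quad y_2=K^{-1}(x_2+x_3a_1),\quad y_3=K^{-2}x_3,
\]
the phase becomes $y\cdot(\eta_1,\eta_2,\eta_1\eta_2)$ modulo a unimodular factor depending only on $x$. Pulling out $K^{-2}$ from $d\xi$ gives $|Ef_\tau(x)|=K^{-2}|Eg(y)|$ with $\|g\|_\infty\lesssim\|f_\tau\|_\infty$, and the Jacobian is $|\det(\partial y/\partial x)|=K^{-4}$; hence $\|Ef_\tau\|_{L^{p_0}(B_R)}^{p_0}=K^{4-2p_0}\int_{y(B_R)}|Eg|^{p_0}\,dy$. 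Since $y(B_R)$ sits inside a ball of radius $O(K^{-1}R)$, the induction hypothesis applies at the strictly smaller scale $\sim K^{-1}R$, and taking $p_0$-th roots yields the claimed factor $K^{-2+4/p_0}$.

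For the strip, assuming $L$ is parallel to $e_1$ and centered at $\xi_2=b$, I would set $\eta_1=\xi_1$ and $\xi_2=b+K^{-1}\eta_2$. An analogous but asymmetric calculation gives the standard phase under
\[
y_1=x_1+bx_3,\quad y_2=K^{-1}x_2,\quad y_3=K^{-1}x_3,
\]
yielding $|Ef_L(x)|=K^{-1}|Eg(y)|$, Jacobian $K^{-2}$, and $\|Ef_L\|_{L^{p_0}(B_R)}^{p_0}=K^{2-p_0}\int_{y(B_R)}|Eg|^{p_0}\,dy$. Here $y(B_R)$ is a slab of extents $\sim R\times K^{-1}R\times K^{-1}R$ and can be covered by $O(1)$ balls of radius $R/2<R$; applying the induction hypothesis on each such ball and summing delivers the stated factor $K^{-1+2/p_0}$.

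The main subtlety is the strip case: since only one of the $\xi$-variables is rescaled, the image $y(B_R)$ retains a full-length side $\sim R$, and one cannot apply induction directly at a tiny scale like $K^{-1}R$. The observation that it still suffices to cover by a bounded number of balls of radius just slightly less than $R$ is exactly what extracts the $K^{-1+2/p_0}$ gain. A minor bookkeeping point---the rescaled $\eta$-support in the strip case is a rectangle that slightly exceeds $D(1)$---is handled by first splitting $L$ into $O(1)$ pieces so that each rescaled piece lands inside $D(1)$, after which the induction hypothesis applies verbatim.
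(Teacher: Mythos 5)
Your proof is correct and follows essentially the same route as the paper's: a parabolic rescaling adapted to the hyperbolic paraboloid, an affine change of variables in $x$ that sends $B(R)$ into a region coverable by $O(1)$ balls of radius strictly less than $R$, and the induction hypothesis applied to each such ball, producing the gains $K^{-2+4/p_0}$ and $K^{-1+2/p_0}$. The only cosmetic differences are that the paper first translates the cap (resp.\ strip) to the origin before rescaling whereas you fold the shift directly into the $x$-change of variables, and you explicitly flag and repair a small support technicality (the rescaled $\eta$-support in the strip case slightly exceeds $D(1)$) that the paper leaves implicit.
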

\begin{proof}
We first show \eqref{eqn:tau}.  
By translation we may assume that $\bar \tau$ is centered at the origin. By abuse of notation, we use $f$ instead of $\tilde f$. Then $f_\tau$ is supported in the square of sidelength $K^{-1}$ with center at the origin. By scaling $(\xi_1,\xi_2) \to (K^{-1}\xi_1, K^{-1}\xi_2)$,
\[ 
Ef_\tau(x_1,x_2,x_3)  = K^{-2} [Ef_\tau^K](K^{-1}x_1,K^{-1}x_2, K^{-2}x_3)
\]
where $f_\tau^{K} = f_\tau(K^{-1}\xi_1,K^{-1}\xi_2)$.
Note that $f_\tau^K$ is supported in the unit square.
By a change of variables, 
\[
\|  [Ef_\tau^K](K^{-1}\cdot,K^{-1}\cdot, K^{-2}\cdot) \|_{L^{p_0}(B(R))} = K^{4/p_0}\|  Ef_\tau^K  \|_{L^{p_0}(T)} 
\]
where $T$ is a tube of dimensions $R/K \times R/K \times R/K^2$.
From the above equations, we have
\[ 
\| Ef_\tau \|_{L^{p_0}(B(R))} \le  K^{-2+\frac{4}{p_0}} \|  Ef_\tau^K  \|_{L^{p_0}(T)}.
\]
Since $ \|  Ef_\tau^K  \|_{L^{p_0}(T)} \le \|  Ef_\tau^K  \|_{L^{p_0}(B(R/2))}$, we can apply the induction hypothesis. Thus,
\[ 
\| Ef_\tau\|_{L^{p_0}(B(R))} \le CC_\epsilon R^\epsilon K^{-2+\frac{4}{p_0}} \| f_\tau^K \|_{L^\infty(S)} = CC_\epsilon R^\epsilon K^{-2+\frac{4}{p_0}} \| f_\tau \|_{L^\infty(S)}.
\]

Now we prove \eqref{eqn:sumtau}.
Let $L=L_{\parallel e_1}$; when $L=L_{\parallel e_2}$ the argument below is similar. By translation we may assume that the center line of $L$ is $e_1$.
Let $f_L = \sum_{\tau: \bar\tau \subset L } f_\tau$. 
Taking a rescaling $(\xi_1,\xi_2) \rightarrow (\xi_1, K^{-1}\xi_2)$, we have
\[ 
Ef_L(x_1,x_2,x_3)  = K^{-1} [Ef_L^K](x_1,K^{-1}x_2, K^{-1}x_3)
\]
where $f_L^{K} = f_L(\xi_1,K^{-1}\xi_2)$. We can see that $f_L^K$ is supported in $[-1,1]^2$. By changing of variables, 
\[
\|  [Ef_L^K](\cdot,K^{-1}\cdot, K^{-1}\cdot) \|_{L^{p_0}(B(R))} = K^{2/p_0}\|  Ef_L^K  \|_{L^{p_0}(L^*)} 
\]
where $L^*$ is a tube of dimensions $R \times R/K \times R/K$. Thus, combining these, we have
\[ 
\| Ef_L \|_{L^{p_0}(B(R))} \le  K^{-1+\frac{2}{p_0}} \|  Ef_L^K  \|_{L^{p_0}(L^*)}.
\]
Cover $L^*$ with two balls of radius $\frac{3}{4}R$. Since $\| Ef_L\|_{L^{p_0}(L^* \cap B(\frac{3R}{4}))} \le \| Ef_L\|_{L^{p_0}(B(\frac{3R}{4}))}$, we can apply the induction hypothesis to each ball. So, we obtain
\[ 
\| Ef_L\|_{L^{p_0}(B(R))} \le CC_\epsilon R^\epsilon K^{-1+\frac{2}{p_0}} \| f_L^K \|_{L^\infty(S)} = CC_\epsilon R^\epsilon K^{-1+\frac{2}{p_0}} \| f_L \|_{L^\infty(S)}.
\]
\end{proof}

Let us set $\alpha = K^{-\epsilon}$.
After raising both sides in \eqref{eqn:tau} to the $p_0$th power, we sum these over $\tau$. Since the number of caps $\tau$ is $K^2$, we have
\[
K^{O(\epsilon)} \sum_{\tau} \int_{B(R)} | Ef_{\tau}(x)|^{p_0} \le C_\epsilon^{p_0} (C K^{-2p_0+6+O(\epsilon)})R^{ p_0 \epsilon} \|f \|_{L^\infty(S)}^{p_0}.
\]
Since \( p_0 > 3 \) and $\lim_{\epsilon \rightarrow 0} K(\epsilon) = \infty$, we can take a sufficiently small \( \epsilon > 0 \) such that \[ 
CK^{-2p_0+6+O(\epsilon)} \le (1/3)^{p_0}.
\] 
So, it gives
\begin{equation} \label{nbr1}
K^{O(\epsilon)} \sum_{\tau} \int_{B(R)} | Ef_{\tau}(x)|^{p_0} \le  3^{-p_0} C_\epsilon^{p_0} R^{p_0 \epsilon} \|f \|_{L^\infty(S)}^{p_0}.
\end{equation}

Similarly, we raise both sides in \eqref{eqn:sumtau} to the $p_0$th power, and sum these over $L$. Then, since the number of strips $L$ is $K$, we have
\[
K^{O(\epsilon)} \sum_{L} \int_{B(R)} \Big|\sum_{\tau: \bar\tau \subset L } Ef_{\tau}(x) \Big|^{p_0} \le C_\epsilon^{p_0} (C K^{-{p_0}+3+O(\epsilon)})R^{p_0 \epsilon } \|f \|_{L^\infty(S)}^{p_0}.
\] 
From \( {p_0} > 3 \) and $\lim_{\epsilon \rightarrow 0} K(\epsilon) = \infty$, it is possible to take a sufficiently small \( \epsilon > 0 \) so that \( CK^{-{p_0}+3+O(\epsilon)} \le (1/3)^{p_0} \). Then,
\begin{equation} \label{nbr2}
K^{O(\epsilon)} \sum_{L} \int_{B(R)} \Big|\sum_{\tau: \bar\tau \subset L } Ef_{\tau}(x) \Big|^{p_0} \le  3^{-p_0} C_\epsilon^{p_0} R^{p_0 \epsilon} \|f \|_{L^\infty(S)}^{p_0}.
\end{equation}

To show \eqref{main estimate}, by \eqref{eqn:star}, \eqref{nbr1} and \eqref{nbr2} it suffices to prove \begin{equation} 
\| \mathbf{B}_{K^{-\epsilon}} [E f] \|_{L^{p_0}(B(R))} \le 3^{-1}C_\epsilon R^{\epsilon} \|f\|_{L^\infty(S)}.
\label{Goal2}
\end{equation}
This immediately follows from the following.

\begin{thm}\label{main part}	
Let $R \gg 1$. For any $0< \epsilon \ll 1$, there exists  $\delta_2 \in (0,\epsilon)$ and $K=K(\epsilon)$ with $\lim_{\epsilon \rightarrow 0} K(\epsilon) = \infty$  such that
if for any $\omega \in S$,
\begin{equation} \label{AvC}
\oint_{B(\omega,R^{-1/2}) \cap S} |f|^2 \le 1,
\end{equation}
then  
\begin{equation} \label{eqn:Goal}
\int_{B(R)} |\mathbf{B}_{K^{-\epsilon}} [E f]|^{p_0} \le C_\epsilon R^{\epsilon} R^{\delta_2}\bigg( \int_S |f|^2 \bigg)^{3/2+\epsilon}.
\end{equation}
Here, $C_\epsilon$ is independent of $R$ and $f$.
\end{thm}
Indeed, the implication from Theorem \ref{main part} to \eqref{Goal2} can be proven as follows.
We may assume that $\|f\|_{L^\infty(S)} \le 1$ by normalization. 
Then, it is easy to see that $\oint_{B(\omega,R^{-1/2}) \cap S} |f|^2 \le \|f\|_{L^\infty(S)}^2$ for any $\omega \in S$.
From \eqref{AvC} it follows that $\int_S |f|^2 \lesssim \sum_{\Omega} \int_{\Omega} |f|^2 \lesssim 1$. Combining this with the above estimate we have
\[ 
\| \mathbf{B}_{K^{-\epsilon}} [E f] \|_{L^{p_0}(B(R))} \le CC_\epsilon^{1/p_0} R^{2\epsilon/p_0}. 
\]
Since $\epsilon>0$ is arbitrary, this gives $\eqref{Goal2}$.
Now, it remains to prove Theorem \ref{main part}. This will be done in the next section. 

\begin{rmk}
The broad function defined in this paper is different from that in \cite{guth2015restriction}. This new broad function guarantees that the bilinear operator in Lemma \ref{lem:Maindecomp} has a stronger separation condition than that in \cite{guth2015restriction}. 
\end{rmk}

\section{Proof of Theorem \ref{main part}}
This section is devoted to prove Theorem \ref{main part}. We first mention a \textit{polynomial partitioning} which is a technique recently applied to some problems in incidence geometry. 

For a function $f$, we define the zero set of $f$ by \( Z(f) = \{ x : f(x) =0 \}\). For a polynomial $P$, we say that a polynomial \( P \) is \textit{non-singular} if it satisfies \( \nabla P(x) \neq 0 \) for each point $x$ in \( Z(P) \). It is known that non-singular polynomials are dense in the vector space of polynomials on $\mathbb R^n$ of degree at most $M$. 
The following is a polynomial partitioning involving non-singular polynomials. 
%
\begin{thm}[Polynomial partitioning for non-singular polynomials, \cite{guth2015restriction}] \label{thm:polyPart}
Assume that a nonnegative function \( f \in L^1(\mathbb R^n) \) is given. Then for each \( M=1,2,\cdots, \) there exists a non-zero polynomial \( P \) of degree at most \( M \) such that 
\[ 
\mathbb R^n \setminus Z(P) = \biguplus_{i=1}^{O(M^n)} O_i \] and all \( \int_{O_i} f \) are comparable. Moreover, the polynomial $P$ is a product of non-singular polynomials. 
\end{thm}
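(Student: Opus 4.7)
The plan is to prove Theorem~\ref{thm:polyPart} by iterated application of the polynomial ham sandwich theorem of Stone--Tukey, followed by a perturbation argument to make each factor non-singular. The natural starting point is the polynomial ham sandwich theorem: given any $N$ finite Borel measures $\mu_1,\ldots,\mu_N$ on $\mathbb R^n$, there is a nonzero polynomial $P$ of degree $O(N^{1/n})$ whose positive and negative open sign sets each carry at least half the mass of every $\mu_i$. This is a Borsuk--Ulam argument on the real vector space of polynomials of degree at most $D$, using that its dimension exceeds $N$ once $D \gtrsim N^{1/n}$.

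The iteration is standard. Set $K = \lceil n \log_2 M \rceil$, so that $2^K \asymp M^n$. Inductively, at step $j$ the sign patterns of polynomials $P_1,\ldots,P_{j-1}$ selected so far partition the measure $f\,dx$ into $2^{j-1}$ pieces of comparable mass. Apply ham sandwich to these $2^{j-1}$ pieces, regarded as $2^{j-1}$ measures, to produce $P_j$ of degree $O(2^{(j-1)/n})$ that bisects each simultaneously. Since $\sum_{j=1}^K 2^{(j-1)/n}$ is a geometric sum dominated by its last term, the product $Q = P_1 P_2 \cdots P_K$ has total degree $O(2^{K/n}) = O(M)$. By construction, the $2^K$ sign cells of $Q$ partition $\mathbb R^n \setminus Z(Q)$ into open sets on each of which $\int f$ is comparable to $\|f\|_{L^1}/2^K$; refining to the connected components of $\mathbb R^n \setminus Z(Q)$ only increases the number of pieces to the Oleinik--Petrovski{\u\i} bound of $O(M^n)$, and the mass per piece can only decrease, still leaving comparable integrals once we group appropriately.

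To arrange that each factor $P_j$ is non-singular, I would use density: in the finite-dimensional vector space of polynomials of degree at most $d$, the singular locus $\{P : \exists\, x \text{ with } P(x) = |\nabla P(x)| = 0\}$ is a proper real-algebraic subvariety, hence non-singular polynomials are open and dense. Having chosen $P_j$ by ham sandwich, I replace it by a non-singular $\widetilde P_j$ whose coefficients lie within some $\eta_j$ of those of $P_j$. Upper semicontinuity of the map $P \mapsto \int_{\{P>0\}} f$ at any $P$ whose zero set has $f$-measure zero (which can be arranged after an infinitesimal translation of $P_j$) ensures that the bisection property is preserved up to an error of size $O(\eta_j \|f\|_{L^1})$. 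Choosing the tolerances $\eta_j$ to decrease geometrically (e.g.\ $\eta_j = 2^{-j}\eta_0$ with $\eta_0$ small relative to the minimum surviving cell mass at every stage) keeps the aggregate loss a bounded fraction.

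The main obstacle is precisely this quantitative control across $K \asymp \log M$ iterated perturbations: one must ensure that all $O(M^n)$ sign cells remain simultaneously nonempty and that the ratio of largest to smallest $f$-integral stays $O(1)$ independent of $K$. I would address this by bookkeeping the comparability constant: if at stage $j$ before perturbation the cells have mass in a range $[a_j, 2a_j]$, then the perturbation to $\widetilde P_j$ moves cell masses by at most $\eta_j \|f\|_{L^1}$; choosing $\eta_j \le 2^{-j-10} a_j / \|f\|_{L^1}$ keeps the ratio within a factor $2$ at every stage. The output $Q = \widetilde P_1 \cdots \widetilde P_K$ is then a product of non-singular polynomials of total degree $O(M)$ whose $O(M^n)$ sign cells have comparable $f$-integrals, as required.
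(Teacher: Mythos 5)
The paper does not prove this theorem at all---it is imported verbatim from Guth \cite{guth2015restriction}---and your argument is essentially the proof given there: iterated polynomial ham sandwich producing a product of bisecting polynomials of total degree $O(M)$ whose $2^K\sim M^n$ sign cells carry equal mass, followed by a density-plus-continuity perturbation of each factor to make it non-singular. The approach is correct, and the only blemishes are cosmetic: the passage to connected components is unnecessary (the sign cells themselves serve as the $O_i$, which is also what the later cell-counting argument via lines crossing $Z(P)$ requires), and the claimed bound $\eta_j\|f\|_{L^1}$ on how much a coefficient perturbation moves a cell mass is not a genuine Lipschitz estimate and should simply be replaced by continuity of $P\mapsto\int_{\{P>0\}}f$ at any nonzero $P$, which holds automatically since $Z(P)$ has Lebesgue measure zero and $f\in L^1$.
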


Now we prove Theorem \ref{main part}. To begin with, let us set 
\begin{equation} \label{deltas}
\delta = \epsilon^2 ,~  \delta_{1} =\epsilon^4 ~ \text{ and }~ \delta_{2} = \epsilon^6.
\end{equation}
Then we have the relation  
\(\ 
\epsilon \gg \delta \gg \delta_{1} \gg \delta_2.
\) 
(This relation plays a crucial role to close the induction below.)

For fixed $R \gg 1$, we analyze $\mathbf{B}_{\alpha} [E f]$. First, we apply Theorem \ref{thm:polyPart} with 
\begin{equation} \label{PolDeg}
M = R^{\delta_1} 
\end{equation}
to \( \chi_{B(R)} |\mathbf{B}_{\alpha} [E f]|^{p_0} \). Then there exists a non-zero polynomial $P$ of degree at most $M$ such that \[ \mathbb R^n \setminus Z(P) = \biguplus_i^{O(M^3)} O_i \] and for each $i$, it satisfies
\begin{equation} \label{eqn:prtcomp}
\int_{B(R) \cap O_i} |\mathbf{B}_\alpha [Ef]|^{p_0} \sim M^{-3} \int_{B(R)} |\mathbf{B}_\alpha [Ef]|^{p_0}. 
\end{equation}

Let us define the \textit{wall} $W$ by
the \( R^{1/2 + \delta} \)-neighborhood of \( Z(P) \)
and the \textit{cell} \( O_i'\) by \( O_i  \setminus W\).
Then we have
\begin{equation}\label{broad function estimate}
\int_{B(R)} |\mathbf{B}_\alpha [Ef]|^{p_0} = \sum_i \int_{B(R) \cap O_i'} |\mathbf{B}_\alpha [Ef]|^{p_0}  + \int_{B(R) \cap W} |\mathbf{B}_\alpha [Ef]|^{p_0}.
\end{equation}

To estimate the above we will use two kinds of induction. The first one is an induction on the scale $R$. We assume that for any $1 \le r \le R/2$, Theorem \ref{main part} is true. If $R=1$ then it is easy to see that the estimate \eqref{eqn:Goal} holds.
The other one is an induction on $\|f\|_{L^2(S)}$.  We assume that for all $g$ with $\|g\|_{L^2(S)} \le \frac{1}{2} \|f\|_{L^2(S)}$, Theorem \ref{main part} is true. 
If $\|g \|_{L^2(S)} \le R^{-1000}$ then we can easily obtain \eqref{eqn:Goal}.

\subsection{Cell estimate} 
We consider the contribution of the summation part of the right side of \eqref{broad function estimate}. To deal with this part we will use the second induction.
Suppose that this summation part  dominates the other term. Then, 
\begin{equation} \label{cellDorm}
\int_{B(R)} |\mathbf{B}_\alpha [Ef]|^{p_0}  \lesssim \sum_i \int_{B(R) \cap O_i'} |\mathbf{B}_\alpha [Ef]|^{p_0}.
\end{equation}

\begin{lem}
Assume \eqref{eqn:prtcomp} and \eqref{cellDorm}.
Then there exists a subcollection $\mathcal I$ with cardinality $O(M^3)$ such that for all $i \in \mathcal I$, 
\begin{equation} \label{eqn:cell1}
\int_{B(R) \cap O_i'} |\mathbf{B}_\alpha [Ef]|^{p_0} \sim M^{-3} \int_{B(R)} |\mathbf{B}_\alpha [Ef]|^{p_0}.
\end{equation}
\end{lem}
\begin{proof}
For convenience, let $X_i:= \int_{B(R) \cap O_i'} |\mathbf{B}_\alpha [Ef]|^{p_0}$ and $A:=M^{-3}\int_{B(R)} |\mathbf{B}_\alpha [Ef]|^{p_0}$, and  let $N$ be the number of cells $O_i$. Then from \eqref{eqn:prtcomp} we see that there exists a constant $C_1 \ge 1$ such that for each $i$,
\begin{equation} \label{s1}
X_i \le C_1A,
\end{equation}
and from \eqref{cellDorm} we see that there exists a constant $C_2 \ge 1$ such that 
\begin{equation} \label{s2} 
N A \le C_2 \sum_{i=1}^N X_i.
\end{equation}

Let $c_*$ be a small positive number which will be chosen later. 
Suppose that there are  $\lambda N$ cells, $\lambda \in [0,1]$, such that $X_i \ge c_*A$ for all $i$. Then it suffices to show $\lambda \sim 1$. In \eqref{s2} we decompose $\sum X_i$ into two parts as follows:
\[
C_2^{-1} NA \le \sum_{X_i \ge c_*A}^{\lambda N} X_i + \sum_{X_i < c_*A}^{(1-\lambda) N} X_i. 
\]
By \eqref{s1}, it is bounded by
\begin{align*}
&\le \lambda N C_1A + \sum_{X_i < c_*A}^{(1-\lambda) N} X_i \\
&\le \lambda N C_1A + (1-\lambda)N c_*A.
\end{align*}
By dividing the above by $NA$, we have
\(
C_2^{-1} \le C_1\lambda + (1-\lambda)c_*.
\)
By rearranging we have $\lambda \ge \frac{C_2^{-1} - c_*}{C_1-c_*}$ provided $0< c_* \le \frac{1}{2C_2}$. It means $\lambda \sim 1$. 
\end{proof}

We rewrite \eqref{eqn:cell1} as
\begin{equation}\label{eqn:cell3}
\int_{B(R)} |\mathbf{B}_\alpha [Ef]|^{p_0} \sim M^3 \int_{B(R) \cap O_i'} |\mathbf{B}_\alpha [Ef]|^{p_0}. 
\end{equation}
We will apply  the second induction hypothesis to the above. For this we need several lemmas for restricting the wavepackets $f_T$ to those with $T$ passing through $O_i'$. 
We decompose $f$ into the wave packets on $B(R)$. 
By (3) of Proposition \ref{prop:wavepack} we may set 
\begin{equation} \label{WP}
f = \sum_{T \in \mathbb T} f_T.
\end{equation} 
Then, $f_\tau$ can be written as
\[
f_\tau = \sum_{T \in \mathbb T : \omega(T) \in \tau}f_T.
\]

\noindent For each $i$ and $\tau$, let us define $f_{\tau,i}$ and $f_i$ by
\[
f_{\tau,i} = \sum_{T \in \mathbb T_i : \omega(T) \in \tau} f_{T} \quad \text{and} \quad
f_i = \sum_{\tau} f_{\tau,i}
\]
respectively. 
We will consider the wave packets $f_T$ with $T \cap O_i' \neq \emptyset$.
Let $\mathbb T_i(\Omega)$ be the subcollection defined by
\[
\mathbb T_i(\Omega) = \{ T \in \mathbb T(\Omega) : T \cap O_i' \neq \emptyset \},
\] 
and let $\mathbb T_i = \bigcup_\Omega \mathbb T_i(\Omega)$.

\begin{lem}\label{broad lemma}
For $x \in O_i'$,
\begin{equation} \label{eqn:cell2}
|\mathbf{B}_\alpha [Ef](x)|
\lesssim |\mathbf{B}_{4\alpha} [Ef_i](x)| +  R^{-900}\sum_{\tau}\|f_{\tau}\|_{L^2(S)}.
\end{equation}
\end{lem}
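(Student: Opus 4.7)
The plan is to exploit that for $x \in O_i'$, the wave packets $f_T$ with $T \notin \mathbb T_i$ have their tubes disjoint from $O_i'$, so by property (2) of Proposition \ref{prop:wavepack} each contributes only a polynomially small amount to $Ef(x)$. Replacing $f$ by $f_i$ then perturbs $Ef(x)$, each $Ef_\tau(x)$, and each $L$-strip sum by a negligible amount, and we can transfer the $\alpha$-broadness hypothesis from $Ef$ to $Ef_i$ at the cost of a constant factor in $\alpha$.

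First I would control the wave-packet remainders. For $x \in O_i'$ and $T \in \mathbb T\setminus\mathbb T_i$ we have $x \notin T$, so property (2) gives $|Ef_T(x)| \le R^{-1000}\|f\|_{L^2(S)}$. Since $|\mathbb T|$ is polynomially bounded in $R$ (of order $R^{2-2\delta}$, from $\sim R$ caps $\Omega$ and $\sim R^{1-2\delta}$ tubes per cap in $B(R)$), summing this together with property (3) yields
\[
|Ef(x) - Ef_i(x)| \lesssim R^{-990}\|f\|_{L^2(S)} \lesssim R^{-990}\sum_\tau\|f_\tau\|_{L^2(S)},
\]
where the last step uses finite overlap of the caps $\tau$. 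The analogous bound holds for $|Ef_\tau(x) - Ef_{\tau,i}(x)|$ for each $\tau$ and for $\bigl|\sum_{\tau:\bar\tau\cap L\neq\emptyset}(Ef_\tau(x)-Ef_{\tau,i}(x))\bigr|$ for each strip $L$, since these are sums over subsets of $\mathbb T\setminus\mathbb T_i$.

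I would then split on the size of $|Ef(x)|$. If $|Ef(x)| \le R^{-900}\sum_\tau\|f_\tau\|_{L^2(S)}$, then $|\mathbf B_\alpha[Ef](x)| \le |Ef(x)|$ is immediately absorbed by the second term of \eqref{eqn:cell2}. Otherwise $|Ef(x)|$ exceeds the wave-packet error by a factor of at least $R^{90}$, giving $|Ef_i(x)| \ge \tfrac12|Ef(x)|$ and making each error much smaller than $\alpha|Ef(x)|$; here we crucially use that $\alpha = K^{-\epsilon}$ is a constant independent of $R$, so $R^{-90} \ll \alpha$ for $R$ large. Assuming $\mathbf B_\alpha[Ef](x) \neq 0$, i.e.\ that $x$ is $\alpha$-broad for $Ef$, it follows that for every $\tau$ and every $L$
\[
|Ef_{\tau,i}(x)|,\ \Big|\sum_{\tau:\bar\tau\cap L\neq\emptyset}Ef_{\tau,i}(x)\Big| \ \le\ 2\alpha|Ef(x)| \le 4\alpha|Ef_i(x)|,
\]
so $x$ is $4\alpha$-broad for $Ef_i$. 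Hence $|\mathbf B_{4\alpha}[Ef_i](x)| = |Ef_i(x)| \ge \tfrac12|Ef(x)| = \tfrac12|\mathbf B_\alpha[Ef](x)|$, and combining the two cases yields \eqref{eqn:cell2}. The only subtle point is the broadness transfer in the second case, which requires the error to be a small fraction of $\alpha|Ef(x)|$ rather than just of $|Ef(x)|$; beyond this the argument is routine bookkeeping.
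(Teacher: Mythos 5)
Your proof follows the same route as the paper's: split off the wave packets whose tubes miss $O_i'$, use property (2) of Proposition \ref{prop:wavepack} to show they contribute only a rapidly decaying error, dispose trivially of the case where $|Ef(x)|$ is itself rapidly small, and otherwise transfer $\alpha$-broadness to $Ef_i$ at the cost of a constant factor. The case split on the size of $|Ef(x)|$ rather than $|Ef_i(x)|$ is an inessential cosmetic difference.

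One small bookkeeping slip: in the last display you bound $|Ef_{\tau,i}(x)|$ and $\bigl|\sum_{\tau:\bar\tau\cap L\neq\emptyset}Ef_{\tau,i}(x)\bigr|$ \emph{each} by $4\alpha|Ef_i(x)|$ and then declare $x$ to be $4\alpha$-broad for $Ef_i$. But the broadness condition is on the \emph{sum} $\max_\tau|Ef_{\tau,i}(x)|+\max_L\bigl|\sum Ef_{\tau,i}(x)\bigr|$, so your individual bounds only give $8\alpha$-broadness. To recover the stated $4\alpha$, do what the paper does and keep the sum together throughout: the $\alpha$-broadness of $Ef$ gives $\max_\tau|Ef_\tau(x)|+\max_L|\sum Ef_\tau(x)|\le\alpha|Ef(x)|$, the perturbation adds at most two errors, and $|Ef(x)|\le 2|Ef_i(x)|$, giving the sum $\le 2\alpha|Ef(x)|\le 4\alpha|Ef_i(x)|$. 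This is immaterial for the rest of the argument (any fixed constant multiple of $\alpha$ would serve), but as written your chain of inequalities does not quite yield the constant in the lemma as stated.
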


\begin{proof}
Using \eqref{WP} we decompose $Ef$ as
\[
Ef =\sum_{T \in \mathbb T} Ef_T = \sum_{T \in \mathbb T_i} Ef_T + \sum_{T \in \mathbb T \setminus \mathbb T_i} Ef_T.
\]
From (2) of Proposition \ref{prop:wavepack}, it follows that for $x \in O_i'$,
\begin{equation}\label{cell part} 
Ef(x) = Ef_{i}(x) + O\big(R^{-990}\sum_{\tau}\|f_{\tau}\|_{L^2(S)} \big).
\end{equation}

Now it suffices to show that if $x \in O_i'$ is an $\alpha$-broad point of $Ef$ then $x$ is also a $4\alpha$-broad point of $Ef_i$.
We may assume $|Ef_i(x)| \ge R^{-900}\sum_{\tau}\|f_{\tau}\|_{L^2(S)}$; otherwise, from \eqref{cell part} we have 
\[ 
|\mathbf{B}_\alpha [Ef](x)| \le  |Ef(x)| \le |Ef_{i}(x)| + O \big(R^{-990}\sum_{\tau}\|f_{\tau}\|_{L^2(S)} \big) \lesssim R^{-900}\sum_{\tau}\|f_{\tau}\|_{L^2(S)},
\]
which satisfies \eqref{eqn:cell2}.
Since $x \in O_i'$ is an $\alpha$-broad point of $Ef$, we have that for any cap $\tau$,
\begin{align*}
|Ef_{\tau,i}(x)| &\le| Ef_{\tau}(x)| + O(R^{-990}\|f_{\tau}\|_{L^2(S)}) \\ &\le
\alpha |Ef(x)| + O(R^{-990}\|f_{\tau}\|_{L^2(S)}).
\end{align*}
From \eqref{cell part} it is bounded by $\alpha |Ef_i(x)| + O(R^{-990} \sum_{\tau}\|f_{\tau}\|_{L^2(S)})$. So, for large $R$, it implies that for any $\tau$,
\[ 
|Ef_{\tau,i}(x)| \le
 2\alpha |Ef_i(x)|.
 \]
Similarly, for any $L= L_{\parallel e_1}$ or $L_{\parallel e_2}$, we have that for any $x \in O_i'$,
\begin{align*}
\Big|\sum_{\tau: \bar\tau \subset L } Ef_{\tau,i}(x) \Big| &\le \Big| \sum_{\tau: \bar\tau \subset L } Ef_{\tau}(x) \Big| + O(R^{-990}\sum_{\tau: \bar\tau \subset L } \|f_{\tau}\|_{L^2(S)}) \\ 
&\le \alpha |Ef(x)| + O(R^{-990}\sum_{\tau: \bar\tau \subset L } \|f_{\tau}\|_{L^2(S)}) \\ 
&\le \alpha |Ef_i(x)| + O(R^{-990}\sum_{\tau}\|f_{\tau}\|_{L^2(S)}) \\
&\le 2\alpha |Ef_i(x)|.
\end{align*}
From these it follows that for any $\alpha$-broad point $x \in O_i'$,
\[ 
\max_{\tau} |Ef_{\tau,i}(x)| + \max_{L=L_{\parallel e_1} \text{ or } L_{\parallel e_2}} \Big|\sum_{\tau: \bar\tau \subset L } Ef_{\tau,i}(x) \Big| \le 4\alpha |Ef_i(x)|.
\] 
\end{proof}

We raise both sides in \eqref{eqn:cell2} to the $p_0$th power and integrate it over $B(R) \cap O_i'$. Then
\begin{align} 
\int_{B(R) \cap O_i'} |\mathbf{B}_\alpha [Ef]|^{p_0} 
&\lesssim \int_{B(R) \cap O_i'} |\mathbf{B}_{4\alpha} [Ef_i]|^{p_0} + K^2 R^{-2000}\sum_{\tau}\|f_{\tau}\|_{L^2(S)}^{p_0} \nonumber \\
\label{Gfi}
&\lesssim \int_{B(R)} |\mathbf{B}_{4\alpha} [Ef_i]|^{p_0} + K^2 R^{-2000}\sum_{\tau}\|f_{\tau}\|_{L^2(S)}^{p_0}.
\end{align}

From (6) in Proposition \ref{prop:wavepack}, we have
\begin{align*}
\oint_{B(\omega, R^{-1/2}) \cap S} |f_{\tau,i}|^2 \lesssim  \oint_{10B(\omega, R^{-1/2}) \cap S} |f_{\tau}|^2 \lesssim 1.
\end{align*}
So, to apply the second induction hypothesis to \eqref{Gfi}, it remains to show 
\begin{equation} \label{chconIn}
\|f_{i}\|_{L^2(S)} \le \frac{1}{2} \|f\|_{L^2(S)}.
\end{equation}

We first prove the following lemma by using the geometric fact that 
if $P$ is a non-zero polynomial of degree $M$ then the algebraic surface $Z(P)$ intersects a line in at most $M$ points. 
 
\begin{lem}\label{sake estimate}
\begin{equation} \label{degCount}
\sum_{i} \int |f_{\tau,i}|^2 \lesssim M \int |f_{\tau}|^2 + R^{-900}  \|f_\tau\|_{L^2(S)}^2.
\end{equation}
\end{lem}
\begin{proof}
From (1) of Proposition \ref{prop:wavepack} we have that for each $i$,
\begin{align*}
\int |f_{\tau,i}|^2 &\lesssim \int \Big| \sum_{\Omega: \Omega \cap \tau \neq \emptyset} \sum_{T \in \mathbb T_i(\Omega) } f_{T} \Big|^2 \\
&\lesssim  \sum_{\Omega: \Omega \cap \tau \neq \emptyset}  \int \Big| \sum_{T \in \mathbb T_i(\Omega) }  f_{T} \Big|^2. 
\end{align*}
From (4) of Proposition \ref{prop:wavepack} it follows that for each $i$,
\[
\int |f_{\tau,i}|^2 \lesssim  \sum_{\Omega: \Omega \cap \tau \neq \emptyset}  \sum_{T \in \mathbb T_i(\Omega)} \int |  f_{T}|^2 + R^{-950}  \|f_\tau\|_{L^2(S)}^2.
\]
By summing over $i$,
\begin{align*}
\sum_i \int |f_{\tau,i}|^2 &\lesssim  \sum_{\Omega: \Omega \cap \tau \neq \emptyset} \sum_i  \sum_{T \in \mathbb T_i(\Omega)} \int |  f_{T}|^2 + R^{-900}  \|f_\tau\|_{L^2(S)}^2\\
&\lesssim \sum_{\Omega: \Omega \cap \tau \neq \emptyset} \sum_{T \in \mathbb T(\Omega)} \sum_{i: O_i' \cap T \neq \emptyset } \int |  f_{T}|^2 + R^{-900} \|f_\tau\|_{L^2(S)}^2.
\end{align*}
We observe that each tube \( T \in \mathbb T \) intersects \( O_i \) at most $(M+1)$ times because a line can cross \( Z(P) \) at most $M$ times. It makes 
\[
\sum_{i} \int |f_{\tau,i}|^2
\lesssim M \sum_{\Omega: \Omega \cap \tau \neq \emptyset} \sum_{T \in \mathbb T(\Omega)} \int |  f_{T}|^2 + R^{-900}  \|f_\tau\|_{L^2(S)}^2.
\]
From (1) and (4) of Proposition \ref{prop:wavepack}, we can finally obtain \eqref{degCount}.
\end{proof}

We sum \eqref{degCount} over $\tau$, and then we use the pigeonhole principle to select an $i_0 \in \mathcal I$ such that 
\begin{equation} \label{cttau}
\sum_{\tau} \int |f_{\tau,i_0}|^2 \lesssim M^{-2} \sum_{\tau} \int |f_{\tau}|^2 + M^{-3}R^{-900} \sum_{\tau} \|f_\tau\|_{L^2(S)}^2.
\end{equation}
Since $S$ is covered by caps $\tau$, it means that $\|f_{i_0}\|_{L^2(S)}^2 \le (CM^{-2} + M^{-3}R^{-900}) \|f\|_{L^2(S)}^2$.
Thus, by \eqref{PolDeg} we have \eqref{chconIn} for sufficiently large $R$. Now we apply the second induction hypothesis to \eqref{Gfi} with $i=i_0$.
Then it gives that
\begin{equation*} 
\int_{B(R) \cap O_{i_0}'} |\mathbf{B}_{\alpha} [Ef_{i_0}]|^{p_0} \lesssim C_\epsilon R^{\epsilon}R^{\delta_2} \Big( \sum_{\tau} \int_S |f_{\tau,i_0}|^2 \Big)^{3/2+\epsilon}
 + K^2 R^{-2000}\sum_{\tau}\|f_{\tau}\|_{L^2(S)}^{p_0}.
\end{equation*}
By substituting this in \eqref{eqn:cell3}, one has 
\begin{equation*}
\int_{B(R)} |\mathbf{B}_\alpha [Ef]|^{p_0}
\lesssim C_\epsilon M^3 
R^{\epsilon}R^{\delta_2} \Big( \sum_{\tau} \int_S |f_{\tau,i_0}|^2 \Big)^{3/2+\epsilon} +  K^2R^{-2000}\sum_{\tau}\|f_{\tau}\|_{L^2(S)}^{p_0}.
\end{equation*}
By \eqref{cttau}, it is bounded by
\begin{align*}
&\lesssim C_\epsilon ( M^{-2\epsilon} R^{\delta_2})
R^{\epsilon}  \bigg( \sum_{\tau} \int_S |f_{\tau}|^2 \bigg)^{3/2+\epsilon}
+ K^2R^{-1000}\sum_{\tau}\|f_{\tau}\|_{L^2(S)}^{p_0} \\
&\lesssim C_\epsilon R^\epsilon ( M^{-2\epsilon} R^{\delta_2} + K^2 R^{-1000}) \bigg( \sum_{\tau} \int_S |f_{\tau}|^2 \bigg)^{3/2+\epsilon},
\end{align*}
where we used the estimate $\|f\|_{L^2(S)} \lesssim 1$,
(which follows from the condition \eqref{AvC}; $\|f\|_{L^2(S)}^2 \lesssim \sum_{\Omega} \int |f_{\Omega}|^2 \lesssim 1$).

From \eqref{PolDeg} and \eqref{deltas}, one has $M^{-2\epsilon} R^{\delta_2} = R^{-2\epsilon^5+\epsilon^6}$. 
Since the exponent of $R$ is negative, we have $CR^{-2\epsilon^5+\epsilon^6} + CK^2R^{-1000}\le 1/2$ for sufficiently large $R$. Thus we obtain
\[ 
\int_{B(R)} |\mathbf{B}_\alpha [Ef]|^{p_0}
\le 2^{-1} C_\epsilon R^\epsilon  \bigg( \sum_{\tau} \int_S |f_{\tau}|^2 \bigg)^{3/2+\epsilon}.
\]

\subsection{Wall estimate} 
Now we suppose that the integral $\int_{B(R) \cap W} |\mathbf{B}_\alpha[Ef]|^{p_0}$ dominates the other term in \eqref{broad function estimate}. Then it suffices to prove 
\begin{equation} \label{WEST}
\int_{B(R) \cap W} |\mathbf{B}_\alpha[Ef]|^{p_0} \le 2^{-1} C_\epsilon R^\epsilon R^{\delta_2} \bigg( \sum_{\tau} \int_S |f_{\tau}|^2 \bigg)^{3/2+\epsilon}.
\end{equation}

We split the wave packets $f_T$ into transverse ones  and tangent ones to the wall $W$.  
We first cover $B(R)$ with \( O(R^{3\delta}) \) balls $B_j$ of radius $R^{1-\delta}$. (Later we will use an inductive argument to each $B_j$ to estimate the transversal part.) 

We define the collection \( \mathbb T_{j}^{\flat} \) of \textit{tangential tubes}  to be the collection of all tubes $T \in \mathbb T$ such that \( T \cap W \cap B_j \neq \emptyset \) and
if $z$ is any non-singular point of \( Z(P) \) in \( 2B_j \cap 10T \), then
\[
\angle(v(T), T_zZ) \le R^{-1/2 + 2\delta},
\]
where $T_zZ$ is the tangent plane of $Z(P)$ at a point $z$. 
We also define the collection $T \in \mathbb T$ of \textit{transversal tubes}
\( \mathbb T_{j}^{\sharp}\) to be the collection of all tubes such that \( T \cap W \cap B_j \neq \emptyset \) and there exists a non-singular point $z$ of \( Z(P) \) in \( 2B_j \cap 10T \) so that
\[
\angle(v(T), T_zZ) > R^{-1/2 + 2\delta}.
\]
If $I$ is a subcollection of the caps $\tau$, we define $f_I$ by
\[ 
f_{I} :=  \sum_{\tau \in I} f_{\tau},
\]
and set  
\[
f_{\tau,j}^{\sharp} := \sum_{T \in \mathbb T_j^{\sharp} : \omega(T) \in \tau} f_{T},\quad  f_j^{\sharp} := \sum_{\tau} f_{\tau,j}^{\sharp}  \quad\text{and}\quad
f_{I,j}^{\sharp} :=  \sum_{\tau \in I} f_{\tau,j}^\sharp, 
\]
and similarly define $f_{\tau,j}^\flat$, $f_j^{\flat}$  and $f_{I,j}^\flat$.

We will consider a bilinear form of $Ef$ under a certain separation condition. 
For $A, B \subset \mathbb R^2$, let $\dist_{\xi_i}(A,B) :=\dist(\mathrm{proj}_{\xi_i}A,\mathrm{proj}_{\xi_i}B)$, where $\mathrm{proj}_{\xi_i}$ is a projection to $\xi_i$-axis.
We define the bilinear operator $\mathrm{Bil}(Ef)$ as
\[
\mathrm{Bil}(Ef) := \sum_{\substack{(\tau_1,\tau_2) : \dist_{\xi_1}(\bar\tau_1,\bar\tau_2) \ge \frac{1}{2}K^{-1},\\ \dist_{\xi_2}(\bar\tau_1,\bar\tau_2) \ge \frac{1}{2}K^{-1}}} |Ef_{\tau_1}|^{1/2} |Ef_{\tau_2}|^{1/2}.
\]
By using the definition of broad point we can decompose $\mathbf{B}_\alpha [Ef]$ as follows. 

\begin{lem} \label{lem:Maindecomp}
Let $\alpha = K^{-\epsilon}$. Then, for any $x \in B_j \cap W$, 
\begin{equation} \label{BilDec}
|\mathbf{B}_\alpha [Ef](x)| \lesssim \sum_I | \mathbf B_{C\alpha} [Ef_{I,j}^{\sharp}](x)| +K^{100} \mathrm{Bil}(Ef_{j}^{\flat})(x) + R^{-900}\sum_\tau \|f_\tau\|_{L^2(S)},
\end{equation}
where $I$ runs over all subcollections consisting of caps $\tau$.
\end{lem}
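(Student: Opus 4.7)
The plan is to decompose $Ef(x)$ for $x \in B_j \cap W$ into transversal, tangential, and negligible wave-packet pieces, and then dispatch the flat piece by a bilinear extraction while reducing the sharp piece to a broad function. First, any tube $T$ with $T \cap W \cap B_j = \emptyset$ fails to contain $x$, so by property (2) of Proposition \ref{prop:wavepack} it contributes only $O(R^{-1000}\|f_\tau\|_{L^2(S)})$ to $Ef(x)$; the remaining tubes split into $\mathbb T_j^\sharp \cup \mathbb T_j^\flat$, giving
\[
Ef(x) = Ef_j^\sharp(x) + Ef_j^\flat(x) + O\bigl(R^{-1000}\textstyle\sum_\tau \|f_\tau\|_{L^2(S)}\bigr),
\]
with the same negligibility estimate applying cap-by-cap and strip-by-strip; this will transfer $\alpha$-broadness of $Ef$ into $2\alpha$-broadness of $Ef_j^\sharp$ whenever $|Ef_j^\sharp(x)| \gtrsim |Ef_j^\flat(x)|$.

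Assuming $x$ is $\alpha$-broad for $Ef$, I would set $S(x) = \{\tau : |Ef_{\tau,j}^\flat(x)| \ge K^{-100}|Ef_j^\flat(x)|\}$ and split into two cases on the bilinear structure of $Ef_j^\flat$. \emph{Case A (bilinear):} some pair $\tau_1,\tau_2 \in S(x)$ satisfies $\dist(\bar\tau_1,\bar\tau_2) \ge K^{-1}$ and $L(\tau_1,\tau_2)$ nonparallel to the axes; then $\mathrm{Bil}(Ef_j^\flat)(x) \ge |Ef_{\tau_1,j}^\flat(x)|^{1/2}|Ef_{\tau_2,j}^\flat(x)|^{1/2} \ge K^{-100}|Ef_j^\flat(x)|$, yielding $|Ef_j^\flat(x)| \le K^{100}\mathrm{Bil}(Ef_j^\flat)(x)$. \emph{Case B (structured):} no such pair exists, so $S(x)$ is a clique in the ``forbidden graph'' whose edges connect cap pairs that are either adjacent or axis-collinear. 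A short combinatorial argument shows such cliques lie, up to $O(1)$ adjacent caps, in a single row $L_{\parallel e_1}$ or column $L_{\parallel e_2}$; the caps outside $S(x)$ contribute at most $K^2 \cdot K^{-100}|Ef_j^\flat(x)|$ to $Ef_j^\flat(x)$, while the $S(x)$-sum is bounded by $|\sum_{\tau : \bar\tau\cap L \neq \emptyset} Ef_\tau(x)| + O(R^{-900}\sum_\tau\|f_\tau\|_{L^2(S)}) \le \alpha|Ef(x)| + \text{error}$ via the axis-strip clause of the broadness definition. Thus in Case B, $|Ef_j^\flat(x)| \lesssim \alpha|Ef(x)| + \text{error}$. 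In both cases, if $|Ef_j^\sharp(x)| \ge |Ef_j^\flat(x)|$ the broadness transfer yields $2\alpha$-broadness of $Ef_j^\sharp$, so $|Ef_j^\sharp(x)| = |\mathbf{B}_{2\alpha}[Ef_j^\sharp](x)|$; otherwise $|Ef(x)| \lesssim |Ef_j^\flat(x)| + \text{error}$ and the bilinear or negligible bound above already suffices. Taking $I$ to be the full cap collection then yields the claimed bound \eqref{BilDec}.

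The main obstacle is the combinatorial step in Case B: cliques of the ``forbidden graph'' must be shown to lie in a single row or column up to constant-size adjacency clusters. Once this is established, the new axis-strip clause of the broad function (compare the Remark following Theorem \ref{main part}) is exactly what is needed to absorb the structured flat contribution by the broadness hypothesis---without it, the standard argument of \cite{guth2015restriction} would not close for a hyperbolic surface.
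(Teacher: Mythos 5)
Your overall architecture (split $Ef$ into $Ef_j^\sharp + Ef_j^\flat$ plus error, dichotomize on whether the flat part has bilinear structure) matches the paper's intent, and your combinatorial observation --- that a ``pairwise non-bilinear'' collection of caps must, up to $O(1)$ adjacent caps, lie in a single axis-parallel $K^{-1}$-strip --- is exactly the fact the paper uses. But there are two genuine gaps, both stemming from the decision to take $I$ equal to the full cap collection rather than a carefully chosen $x$-dependent subcollection.

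\textbf{Gap 1: bounding $|Ef_j^\flat(x)|$ by the strip clause of broadness.} In Case B you claim that the $S(x)$-contribution to $Ef_j^\flat(x)$ is controlled by $\bigl|\sum_{\tau:\,\bar\tau\cap L\ne\emptyset}Ef_\tau(x)\bigr|$. This does not follow: $Ef_{\tau,j}^\flat$ is not $Ef_\tau$. Writing $Ef_\tau = Ef_{\tau,j}^\sharp + Ef_{\tau,j}^\flat + O(\mathrm{err})$, the identity gives
$\sum_{\tau\in S(x)}Ef_{\tau,j}^\flat(x) = \sum_{\tau\in S(x)}Ef_\tau(x) - \sum_{\tau\in S(x)}Ef_{\tau,j}^\sharp(x) + O(\mathrm{err})$,
and the broadness hypothesis controls only the first term; the transversal piece $\sum_{\tau\in S(x)}Ef_{\tau,j}^\sharp(x)$ over the strip is completely uncontrolled, so the conclusion ``$|Ef_j^\flat(x)|\lesssim\alpha|Ef(x)|+\mathrm{err}$'' is not established.

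\textbf{Gap 2: broadness transfer to $Ef_j^\sharp$.} Even granting Case B, the transfer of $\alpha$-broadness from $Ef$ to $C\alpha$-broadness of $Ef_j^\sharp$ (the \emph{full} sharp part) requires, for every $\tau$, $|Ef_{\tau,j}^\sharp(x)| \le |Ef_\tau(x)| + |Ef_{\tau,j}^\flat(x)| + \mathrm{err} \le C\alpha|Ef_j^\sharp(x)|$. For $\tau\in S(x)$, $|Ef_{\tau,j}^\flat(x)|$ is only bounded \emph{below} (by $K^{-100}|Ef_j^\flat(x)|$); there is no a priori upper bound, since cancellation in $Ef_j^\flat=\sum_\tau Ef_{\tau,j}^\flat$ means a single $|Ef_{\tau,j}^\flat(x)|$ can dwarf $|Ef_j^\flat(x)|$. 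So the max over $\tau$ cannot be pushed below $C\alpha|Ef_j^\sharp(x)|$, and the broad function $\mathbf{B}_{C\alpha}[Ef_j^\sharp](x)$ may simply vanish. This is precisely why the lemma's statement allows $I$ to range over subcollections.

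The paper sidesteps both problems by defining $I$ (relative to $|Ef(x)|$, not $|Ef_j^\flat(x)|$) as the caps with $|Ef_{\tau,j}^\flat(x)|\le K^{-100}|Ef(x)|$, setting $\acute I^c$ to be the caps meeting the bad strip $L$, and working with $\acute I=I\setminus\acute I^c$. The broadness hypothesis is applied to $Ef$ itself to discard the whole strip (sharp part included), giving $|Ef(x)|\lesssim|Ef_{\acute I}(x)|$, and then the $K^{-100}$ bound --- which holds for \emph{every} $\tau\in\acute I$ --- both absorbs the flat part of $Ef_{\acute I}$ and makes the broadness transfer to $Ef_{\acute I,j}^\sharp$ go through. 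Without restricting to such an $\acute I$, neither step works.
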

\begin{proof}
Suppose that $x \in B_j \cap W$ is an $\alpha$-broad point of $Ef$.
We assume that 
\begin{equation} \label{asumTrivial}
|Ef(x)| \ge CR^{-900} \sum_\tau \|f_\tau\|_{L^2(S)};
\end{equation} 
otherwise, it trivially  gives \eqref{BilDec}.

Let \( I \) be the collection of caps $\tau$ satisfying
\begin{equation} \label{eqn:Ide}
|Ef_{\tau,j}^{\flat}(x)| \le K^{-100}|Ef(x)|.
\end{equation}
Consider the complement $I^c$. 
We will say that caps $\tau_1$ and $\tau_2$ are \textit{strong-separated} if  $\dist_{\xi_1}(\bar\tau_1,\bar\tau_2) \ge \frac{1}{2}K^{-1}$ and $\dist_{\xi_2}(\bar\tau_1,\bar\tau_2) \ge \frac{1}{2}K^{-1}$.
If $I^c$ has two strong-separated caps $\tau_1$, $\tau_2$, then one has 
\begin{equation} \label{BoBil}
|Ef(x)| \le K^{100} | Ef^\flat_{\tau_1,j}(x) |^{1/2} | Ef^\flat_{\tau_2,j}(x) |^{1/2},
\end{equation}
since  
$|Ef(x)| < K^{100} | Ef^\flat_{\tau,j}(x) |$ for all $\tau \in I^c$.

\

We now suppose that $I^c$ does not have any strong-separated pair of caps.
We claim that there exists a strip $\tilde L$ of width $\le 8K^{-1}$ which is  parallel to $e_1$ or $e_2$ and contains all $\bar \tau$ for $\tau \in I^c$. 

Let us prove the claim. By abusing notations, we identify a cap $\tau$ with the projected cap $\bar\tau$. Fix a cap $\tau_0 \in I^c$. 
Let $A_j = \{ \tau \in I^c: \dist_{\xi_j}(\tau_0,\tau) < \frac{1}{2}K^{-1} \}$ for $j=1,2$, and let
$A_0 =\{ \tau \in I^c : \dist_{\xi_1}(\tau_0,\tau) < \frac{3}{2}K^{-1} \text{ and } \dist_{\xi_2}(\tau_0,\tau) < \frac{3}{2}K^{-1} \}$. Then since every $\tau \in I^c$ is not strong-separated to $\tau_0$, one has $I^c = A_1 \cup A_2$. 
Observe that if $\tau_1 \in A_1 \setminus A_0$ and $\tau_2 \in A_2 \setminus A_0$ then  $\tau_1$ and $\tau_2$ are strong-separated. Thus, one has that $A_1 \setminus A_0 = \emptyset$ or $A_2 \setminus A_0 = \emptyset$ by the supposition. If $A_1 \setminus A_0$ is nonempty, we can take a strip of width $8K^{-1}$ and of being parallel to $e_2$ which contains both $A_0$ and $A_1$. For a case of $A_2 \setminus A_0 \neq \emptyset$ we can take a similar strip of being parallel to $e_1$. Therefore, we have the claim.

Let $\acute I^c$ be the collection of caps $\tau$ with $\bar\tau \cap \tilde L \neq \emptyset$, and $\acute I = I \setminus \acute I^c$.
Since $x$ is an $\alpha$-broad point of $Ef$, we have
\begin{align*}
|Ef(x)| 
&\le \Big|\sum_{\tau \in \acute I} Ef_{\tau}(x) \Big| + \Big|\sum_{\tau \in \acute I^c} Ef_{\tau}(x) \Big| \\
&\le \Big|\sum_{\tau \in \acute I} Ef_{\tau}(x) \Big| +  16\max_{L=L_{\parallel e_1} \text{ or } L_{\parallel e_2}} \bigg| \sum_{\tau:\bar\tau \subset L} Ef_{\tau}(x) \bigg| \\
&\le |Ef_{\acute I}(x)| + 16 \alpha |Ef(x)|.
\end{align*}
If $K$ is large enough, then one has $0< 16\alpha < 1/2$. So, by rearranging this we get
\begin{equation} \label{I^c}
|Ef(x)| \lesssim | Ef_{\acute I}(x) |.
\end{equation}

Now we decompose $f_{\acute I}$ into 
\(
f_{\acute I}  = \sum_{\tau \in \acute I} \sum_{T: \omega(T) \in \tau} f_T. 
\)
By (2) in Proposition \ref{prop:wavepack} we can ignore $Ef_T$ with $T \cap (B_j \cap W) = \emptyset$, and so we have
\[
\Big| \sum_{T :\omega(T) \in \tau,~ T \cap  B_j \cap W= \emptyset}Ef_{T}(x) \Big| \lesssim R^{-990}\|f_\tau\|_{L^2(S)}.
\]
Each tube $T$ intersecting $B_j \cap W$ is contained in either $\mathbb T_{j}^{\flat}$ or $\mathbb T_{j}^{\sharp}$. So, for each $\tau \in I'$, 
\begin{equation} \label{Eftau}
Ef_\tau(x) = Ef_{\tau,j}^{\sharp}(x) + Ef_{\tau,j}^{\flat}(x) + O(R^{-990}\|f_\tau\|_{L^2(S)}).
\end{equation}
By summing over \( \tau \in \acute I \), 
\begin{equation*} 
|Ef_{\acute I}(x)| \le |Ef_{\acute I,j}^{\sharp}(x)| + \sum_{\tau \in \acute I} |Ef_{\tau,j}^{\flat}(x)| + O(R^{-990}\sum_\tau\|f_\tau\|_{L^2(S)}).
\end{equation*}
From \eqref{eqn:Ide} and $I' \subset I$ it follows that 
\begin{equation} \label{IconR}
\sum_{\tau \in \acute I} |Ef_{\tau,j}^{\flat}(x)| \le K^{-98}|Ef(x)|.
\end{equation}
Inserting this into the previous inequality, we obtain
\begin{equation*} 
|Ef_{\acute I}(x)| \le |Ef_{\acute I,j}^{\sharp}(x)| + K^{-98}|Ef(x)| + O(R^{-990}\sum_\tau\|f_\tau\|_{L^2(S)}),
\end{equation*}
and by \eqref{asumTrivial},
\begin{equation*} 
|Ef_{\acute I}(x)| \le |Ef_{\acute I,j}^{\sharp}(x)| + K^{-98}|Ef(x)| +C R^{-90}|Ef(x)|.
\end{equation*}
We combine this with \eqref{I^c} and rearrange it. Then it follows that
\begin{equation} \label{bdEdsh} 
|Ef(x)| \lesssim  |Ef_{\acute I,j}^{\sharp}(x)|.
\end{equation}

To prove \(
|\mathbf B_\alpha [Ef](x)| \lesssim  |\mathbf B_{C\alpha}[Ef_{\acute I,j}^{\sharp}](x)|,
\) 
it remains to show that if $x \in B_j \cap W$ is an $\alpha$-broad point of $Ef$ then $x$ is also a $C\alpha$-broad point of $Ef_{\acute I,j}^{\sharp}$.
Let $\tau \in I'$ be given. By \eqref{Eftau} we have 
\[
| Ef_{\tau,j}^{\sharp}(x)| \le |Ef_{\tau,j}(x)| +  |Ef_{\tau,j}^{\flat}(x)| + O(R^{-990} \|f_\tau\|_{L^2(S)}).
\]
From (2) in Proposition \ref{prop:wavepack}, we have that \( |Ef_{\tau,j}(x)| \le |Ef_{\tau}(x)| + O(R^{-990} \sum_\tau \|f_\tau \|_{L^2(S)}) \) for $x \in B_j \cap W$.  So it follows that
\[
| Ef_{\tau,j}^{\sharp}(x)| \le |Ef_{\tau}(x)| +  |Ef_{\tau,j}^{\flat}(x)| + O(R^{-990} \|f_\tau\|_{L^2(S)}).
\]
Since $x$ is an $\alpha$-broad point of $Ef$, we have
\[
| Ef_{\tau,j}^{\sharp}(x)| \le \alpha |Ef(x)| +  |Ef_{\tau,j}^{\flat}(x)| + O(R^{-990} \|f_\tau\|_{L^2(S)}).
\]
From \eqref{eqn:Ide}, \eqref{asumTrivial} and $\alpha = K^{-\epsilon}$, we have
\[ 
|Ef_{\tau,j}^{\flat}(x)| + O(R^{-990} \|f_\tau\|_{L^2(S)})\le (K^{-100} + CR^{-90})|Ef(x)| \le \alpha |Ef(x)|
\]
for large $R$.
By substituting this in the previous one, we obtain
\begin{equation} \label{1broad}
| Ef_{\tau,j}^{\sharp}(x)| \le 2\alpha |Ef(x)|.
\end{equation}

Now, let $\varLambda = L_{\parallel e_1}$ or $L_{\parallel e_2}$.
By \eqref{Eftau},
\[
\Big| \sum_{\tau \in \acute I: \bar\tau \subset \varLambda} Ef_{\tau,j}^{\sharp}(x) \Big| \le \Big| \sum_{\tau \in \acute I: \bar\tau \subset \varLambda} Ef_{\tau,j}(x) \Big| + \sum_{\tau \in \acute I: \bar\tau \subset \varLambda} |Ef_{\tau,j}^{\flat}(x)| + O(R^{-990}\sum_{\tau}\|f_\tau\|_{L^2(S)}).
\]
If $\varLambda$ is parallel to $\tilde L$ then one has $|\sum_{\tau \in \acute I: \bar\tau \subset \varLambda} Ef_\tau(x)| \le |\sum_{\tau: \bar\tau \subset \varLambda} Ef_\tau(x)|$. If $\varLambda$ is perpendicular to $\tilde L$ then $|\sum_{\tau \in \acute I: \bar\tau \subset \varLambda} Ef_\tau(x)| \le |\sum_{\tau: \bar\tau \subset \varLambda} Ef_\tau(x)| + 16\max_{\tau}|Ef_\tau(x)|$. 
Thus, it gives
\begin{equation} \label{checkBr}
\begin{split}
\Big|\sum_{\tau \in \acute I: \bar\tau \subset \varLambda} Ef_{\tau,j}^{\sharp}(x) \Big| 
&\le  \Big|\sum_{\tau: \bar\tau \subset \varLambda} Ef_\tau(x) \Big| + 16\max_{\tau} |Ef_\tau(x)| \\
&\qquad +  \sum_{\tau \in \acute I: \bar\tau \subset \varLambda} |Ef_{\tau,j}^{\flat}(x)| + O(R^{-990}\sum_{\tau}\|f_\tau\|_{L^2(S)}) .
\end{split}
\end{equation}
Since $x$ is an $\alpha$-broad point of $Ef$, one has
\[ 
\Big|\sum_{\tau: \bar\tau \subset \varLambda} Ef_\tau(x) \Big| + 16\max_{\tau} |Ef_\tau(x)| \le 16 \alpha|Ef(x)|. 
\]
By \eqref{IconR} and \eqref{asumTrivial} we have 
\[ 
\sum_{\tau \in \acute I: \bar\tau \subset \varLambda} |Ef_{\tau,j}^{\flat}(x)| + O(R^{-990}\sum_{\tau}\|f_\tau\|_{L^2(S)})\le (K^{-98} + CR^{-90})|Ef(x)| \le \alpha |Ef(x)|
\]
for large $R$.
Inserting these two estimates into \eqref{checkBr}, it follows that
\[
\Big| \sum_{\tau \in \acute I: \bar\tau \subset \varLambda} Ef_{\tau,j}^{\sharp}(x) \Big| \le 20\alpha |Ef(x)|.
\]
To the sum of \eqref{1broad} and the above estimate, we apply \eqref{bdEdsh}. Then,
\[
|Ef^{\sharp}_{\tau,j}(x)| + \Big| \sum_{\tau \in \acute I: \bar\tau \subset \varLambda} Ef_{\tau,j}^{\sharp}(x) \Big| \le C\alpha |Ef_{\acute I, j}^{\sharp}(x)|.
\]
Thus, $x$ is a $C\alpha$-broad point of $Ef_{\acute I,j}^{\sharp}$ and so we have
\[
|\mathbf B_\alpha [Ef](x)| \lesssim  |\mathbf B_{C\alpha}[Ef_{\acute I,j}^{\sharp}](x)|.
\] 
By combining this and \eqref{BoBil} we have 
\[
|\mathbf B_\alpha [Ef](x)| \lesssim  |\mathbf B_{C\alpha}[Ef_{\acute I,j}^{\sharp}](x)| +  K^{100} | Ef^\flat_{\tau_1,j}(x) |^{1/2} | Ef^\flat_{\tau_2,j}(x) |^{1/2} + R^{-900}\sum_\tau \|f_\tau\|_{L^2(S)}.
\]
In this estimate, two strong-separated caps $\tau_1$, $\tau_2$ and $I'$ depend on $x \in B_j \cap W$.
To have independency we replace $|\mathbf B_{C\alpha}[Ef_{\acute I,j}^{\sharp}](x)| +  K^{100} | Ef^\flat_{\tau_1,j}(x) |^{1/2} | Ef^\flat_{\tau_2,j}(x) |^{1/2}$ with $ \sum_I | \mathbf B_{C\alpha} [Ef_{I,j}^{\sharp}](x)| +K^{100} \mathrm{Bil}(Ef_{j}^{\flat})(x)$. Then we obtain \eqref{BilDec}. 
\end{proof}

From Lemma \ref{lem:Maindecomp} it follows that
\begin{multline*}
\int_{B(R) \cap W} |\mathbf{B}_\alpha [Ef]|^{p_0} \\
\le C_\epsilon \Big( \sum_{j,I} \int_{B_j \cap W} | \mathbf{B}_{C\alpha} [Ef_{I,j}^{\sharp}]|^{p_0} +  K^{400} \sum_j\int_{B_j \cap W} \mathrm{Bil}(Ef_{j}^{\flat})^{p_0} + R^{-850} \sum_\tau \|f_{\tau}\|_{L^2(S)}^{p_0} \Big).
\end{multline*}

Now we will consider the transversal part $\sum_{j,I} \int_{B_j \cap W} | \mathbf{B}_{C\alpha} [Ef_{I,j}^{\sharp}]|^{p_0}$ and the tangential part $\sum_j\int_{B_j \cap W} \mathrm{Bil}(Ef_{j}^{\flat})^{p_0}$, respectively. (The last error term $R^{-850} \sum_\tau \|f_{\tau}\|_{L^2(S)}^{p_0}$ is trivially bounded by $R^{\epsilon}\big( \sum_{\tau} \int_S |f_{\tau}|^2 \big)^{3/2+\epsilon}$ for a sufficiently large $R$; for instance, we can use an estimate $\|f\|_{L^2(S)} \lesssim 1$ which follows from \eqref{AvC}.)
For the transversal part we will utilize the induction on scale $R$, and for the tangential part we will directly estimate it by using the bilinear method in \cites{lee2006bilinear, vargas2005restriction}.

\subsubsection{Estimate for the transversal part} 
We claim 
\begin{equation} \label{EstTrans}
\sum_{j,I} \int_{B_j} | \mathbf{B}_{C\alpha} [Ef_{I,j}^{\sharp}]|^{p_0}
\le C_\epsilon R^\epsilon  R^{\delta_2} \bigg( \sum_\tau \int_S |f_{\tau}|^2 \bigg)^{3/2+\epsilon}.                                                              \end{equation}
To prove this we use the inductive argument on $R$. 
From (6) in Proposition \ref{prop:wavepack} we can see that
$\oint_{B(\omega,R^{-1/2} \cap S)} |f_{I,\tau, j}^{\sharp}|^2 \lesssim \oint_{B(\omega,R^{-1/2}) \cap S} |f_\tau|^2 \lesssim 1$.
Using the induction hypothesis we have  
\[
\int_{B_j} |\mathbf{B}_{C\alpha} [Ef_{I,j}^{\sharp}]|^{p_0} \le C_\epsilon R^{(1-\delta)(\epsilon+\delta_2)} \bigg( \sum_\tau \int_S  |f_{\tau,j}^{\sharp}|^2 \bigg)^{3/2+\epsilon}.
\]
By summing these over $j$,
\begin{equation} \label{sumBj}
\sum_j \int_{B_j} |\mathbf{B}_{C\alpha} [Ef_{I,j}^{\sharp}]|^{p_0} \le C_\epsilon R^{(1-\delta)(\epsilon+\delta_2)} \bigg( \sum_\tau \sum_j \int_S |f_{\tau,j}^{\sharp}|^2 \bigg)^{3/2+\epsilon}.
\end{equation}

Now we estimate $\sum_j \int_S |f_{\tau,j}^{\sharp}|^2 $. For this we use the following geometric lemma about the transverse tubes.

\begin{lem}[Lemma 3.5 in \cite{guth2015restriction}] \label{lem:geo1}
There is a nonnegative constant $C$ such that each tube $T \in \mathbb T$ belongs to at most $M^C=R^{C\delta_1}$ different sets $\mathbb T_{j}^{\sharp}$.
\end{lem}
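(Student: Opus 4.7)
The plan is to work in coordinates adapted to $T$ and to exploit the fact that the transversality condition forces the relevant portion of $Z(P)\cap 10T$ to decompose into a polynomial-in-$M$ number of nearly flat ``sheets,'' each of which can lie in only $O(1)$ of the balls $B_j$.

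First, I introduce coordinates $(s,x_1,x_2)$ on $10T$ with the $s$-axis along $v(T)$ and $(x_1,x_2)$ ranging over the cross-sectional disk $D$ of radius $\sim R^{1/2+\delta}$. Let $Z^{\natural}$ denote the set of non-singular points $z\in Z(P)\cap 10T$ at which $\angle(v(T),T_zZ)>R^{-1/2+2\delta}$. By the very definition of $\mathbb T_j^{\sharp}$, every $j$ with $T\in\mathbb T_j^{\sharp}$ supplies a witness point $z_j\in Z^{\natural}\cap 2B_j$, so it suffices to show that the number of balls $B_j$ meeting $Z^{\natural}$ is at most $M^{O(1)}$.

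Second, I analyze a single connected component $\Sigma$ of $Z^{\natural}$. By the implicit function theorem, $\Sigma$ is locally the graph $s=s(x_1,x_2)$ of a smooth function over a region of $D$, and the transversality condition $\sin\angle(v(T),T_zZ)=(1+|\nabla_x s|^2)^{-1/2}>R^{-1/2+2\delta}$ translates to the slope bound $|\nabla_x s|\lesssim R^{1/2-2\delta}$. Since the $x$-domain has diameter $\lesssim R^{1/2+\delta}$, the $s$-extent of $\Sigma$ is therefore $\lesssim R^{1/2+\delta}\cdot R^{1/2-2\delta}=R^{1-\delta}$. Combined with its $x$-diameter, $\Sigma$ fits inside a ball of radius $O(R^{1-\delta})$, and since the $B_j$ have radius $R^{1-\delta}$ with bounded overlap, only $O(1)$ of them can meet $\Sigma$.

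Third, I bound the number of sheets. Since $Z^{\natural}$ is carved out of the degree-$M$ algebraic surface $Z(P)$ by (open) polynomial inequalities of degree $O(M)$ involving $\nabla P$, a standard Milnor--Thom (Oleinik--Petrovsky) bound on the number of connected components of a real semi-algebraic set in $\mathbb R^3$ defined by polynomials of degree $O(M)$ gives at most $M^{O(1)}$ components. Multiplying by the per-sheet cost $O(1)$ from the previous paragraph produces the claimed bound $M^{O(1)}=R^{C\delta_1}$.

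The main subtlety I anticipate is ensuring that each connected component of $Z^{\natural}$ is genuinely a single-valued graph over its projection to $D$, so that the slope bound really controls its $s$-extent. A ``folding'' of $\Sigma$ over $D$ would require $\partial_s P$ to vanish along the fold, but that is exactly the tangential locus excluded from $Z^{\natural}$. A careful implicit-function argument, or a one-time subdivision by intersecting with the auxiliary variety $\{\partial_s P=0\}$ of degree $O(M)$ (which only multiplies the sheet count by another $M^{O(1)}$), settles this point and closes the argument.
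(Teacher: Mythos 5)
The paper does not actually give a proof of this lemma; it is cited verbatim as Lemma 3.5 of Guth's paper, so there is no in-paper argument to compare against. Evaluating your proposal on its own terms: the overall architecture (bound the number of connected components of the ``transverse locus'' $Z^{\natural}$ by Milnor--Thom, then show each component can be seen from only a few balls $B_j$ via the slope bound coming from the transversality condition) is sound and matches the spirit of Guth's argument.

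However, your per-component step has a genuine gap. You conclude that a connected component $\Sigma$ of $Z^{\natural}$ has $s$-extent $\lesssim R^{1/2+\delta}\cdot R^{1/2-2\delta}=R^{1-\delta}$ by multiplying the slope bound by the $x$-diameter of the cross-section. That inference is valid only if $\Sigma$ is a single-valued Lipschitz graph over a quasi-convex subdomain of $D$. Neither fact follows from $\partial_s P\neq 0$ on $Z^{\natural}$: the condition $\partial_s P\neq 0$ makes $\pi:\Sigma\to D$ a local diffeomorphism, but a connected component can still wind around the core of the tube helicoidally (so $\pi$ is globally multi-valued), and even a single-valued branch can project onto a thin, spiralling open set whose intrinsic (path-metric) diameter greatly exceeds $R^{1/2+\delta}$. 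In either case the integral $|s(z_2)-s(z_1)|\le\|\nabla_x s\|_\infty\cdot\mathrm{length}(\pi\circ\gamma)$ is not controlled by $R^{1-\delta}$. Your proposed remedy --- cutting by the auxiliary variety $\{\partial_s P=0\}$ --- is vacuous here, since that variety is disjoint from $Z^{\natural}$ by the very transversality you imposed; intersecting with it does not split the winding.

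The gap is repairable, and the repair is worth recording because it is the crux of the lemma. For a generic vertical line $\{x=x_0\}$, the univariate polynomial $P(\cdot,x_0)$ has at most $M$ real roots, so $\pi:\Sigma\to D$ is at most $M$-to-one; equivalently, the winding number of $\Sigma$ around the core of $T$ is $O(M)$. Decomposing $\Sigma$ into $O(M)$ single-valued sheets and applying the slope bound to each gives $s$-extent $\lesssim M\,R^{1-\delta}$, hence each component meets $M^{O(1)}$ of the $B_j$. Combined with your Milnor--Thom count of $M^{O(1)}$ components (which should be applied after reducing to the non-singular irreducible factors of $P$ --- recall the partitioning polynomial is a product of non-singular polynomials, and a non-singular point of $Z(P)$ lies on exactly one factor, where the tangent planes agree), this yields the stated $M^{C}$. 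So the conclusion survives, but the ``each component lies in $O(1)$ balls'' claim as written is not justified and overstates what the slope bound alone gives.
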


\noindent Using this we can obtain the following lemma.

\begin{lem}\label{CounTube}
For each $\tau$, 
\begin{equation} \label{degCount2}
\sum_{j} \int |f_{\tau,j}^\sharp|^2 \lesssim M^C \int |f_{\tau}|^2
+ R^{-900}\|f_\tau\|_{L^2(S)}^2.
\end{equation}
\end{lem}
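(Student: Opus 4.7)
The plan is to mimic the proof of Lemma \ref{sake estimate} almost verbatim, substituting the geometric bound supplied by Lemma \ref{lem:geo1} for the ``line meets $Z(P)$ in at most $M$ points'' bound used there. The only essential change is at the very last step, where the multiplicity with which a fixed tube is counted among the various $\mathbb{T}_j^{\sharp}$ is controlled by $M^C$ rather than by $M$.

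First, I would fix $\tau$ and expand, using that $f_{\tau,j}^{\sharp} = \sum_{T \in \mathbb{T}_j^{\sharp},\, \omega(T)\in\tau} f_T$ is supported in the finitely many caps $\Omega$ meeting $\tau$ (by property (1) of Proposition \ref{prop:wavepack}). Applying Cauchy--Schwarz in the finitely many $\Omega$ and then property (4) of Proposition \ref{prop:wavepack} to the (pairwise almost-orthogonal) wave packets belonging to the same $\mathbb{T}(\Omega)$, I get
\[
\int |f_{\tau,j}^{\sharp}|^2 \lesssim \sum_{\Omega :\, \Omega\cap \tau\neq \emptyset}\;\sum_{T\in \mathbb{T}_j^{\sharp}(\Omega)} \int |f_T|^2 + R^{-950}\sum_{\tau}\|f_\tau\|_{L^2(S)},
\]
where $\mathbb{T}_j^{\sharp}(\Omega):=\mathbb{T}_j^{\sharp}\cap\mathbb{T}(\Omega)$ and the error comes from the $R^{-1000}$ cross-terms in (4).

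Next I sum over $j$ and interchange the order of summation, grouping by individual tube:
\[
\sum_j \int |f_{\tau,j}^{\sharp}|^2 \lesssim \sum_{\Omega :\, \Omega\cap \tau\neq \emptyset}\;\sum_{T\in \mathbb{T}(\Omega)} \#\{\,j : T\in\mathbb{T}_j^{\sharp}\,\}\cdot \int |f_T|^2 + R^{-900}\sum_{\tau}\|f_\tau\|_{L^2(S)}.
\]
By Lemma \ref{lem:geo1} the multiplicity $\#\{j : T\in\mathbb{T}_j^{\sharp}\}$ is at most $M^C$, so I can pull this factor out of the sum, obtaining
\[
\sum_j \int |f_{\tau,j}^{\sharp}|^2 \lesssim M^C \sum_{\Omega :\, \Omega\cap \tau\neq \emptyset}\;\sum_{T\in \mathbb{T}(\Omega)}\int |f_T|^2 + R^{-900}\sum_{\tau}\|f_\tau\|_{L^2(S)}.
\]
Finally I reassemble: applying property (4) of Proposition \ref{prop:wavepack} in the reverse direction (inside each $\mathbb{T}(\Omega)$) and then property (1) to recognise that the caps $3\Omega$ with $\Omega\cap\tau\neq\emptyset$ sum up, up to finite overlap, to $f_\tau$, I conclude $\sum_{\Omega}\sum_{T\in\mathbb{T}(\Omega)}\int|f_T|^2 \lesssim \int |f_\tau|^2$, which yields \eqref{degCount2}.

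The only step with any content beyond routine wave-packet manipulation is the geometric multiplicity estimate, and that has already been supplied by Lemma \ref{lem:geo1}; everything else is bookkeeping with the almost-orthogonality and support properties of Proposition \ref{prop:wavepack}. The main point to be careful about is accounting for the negligible $R^{-1000}$-type errors so that they aggregate into the allowable $R^{-900}\sum_\tau\|f_\tau\|_{L^2(S)}$ term, which is straightforward given that the total number of tubes and caps is only polynomial in $R$.
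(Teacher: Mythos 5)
Your proposal is correct and follows essentially the same route as the paper: expand over the finitely many caps $\Omega$ meeting $\tau$, use Proposition \ref{prop:wavepack}(4) to reduce to a diagonal sum of $\int|f_T|^2$, interchange the $j$- and $T$-sums, apply Lemma \ref{lem:geo1} for the $M^C$ multiplicity bound, and reassemble via the wave-packet $L^2$ estimates. The only small imprecision (which the paper's own write-up also has) is attributing the final step $\sum_{T\in\mathbb T(\Omega)}\int|f_T|^2\lesssim\int_\Omega|f|^2$ to property (4); it is in fact property (5) of Proposition \ref{prop:wavepack}.
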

\begin{proof}

From (1) in Proposition \ref{prop:wavepack}, 
\begin{align*}
\int |f_{\tau,j}^\sharp|^2 &\lesssim \int \Big| \sum_{\Omega: \Omega \cap \tau \neq \emptyset} \sum_{T \in \mathbb T_j^\sharp(\Omega) } f_{T} \Big|^2 \\
&\lesssim  \sum_{\Omega: \Omega \cap \tau \neq \emptyset}  \int \Big| \sum_{T \in \mathbb T_j^\sharp(\Omega) }  f_{T} \Big|^2. 
\end{align*}
From (4) in Proposition \ref{prop:wavepack}, we see
\[
\int |f_{\tau,j}^\sharp|^2 \lesssim  \sum_{\Omega: \Omega \cap \tau \neq \emptyset}  \sum_{T \in \mathbb T_j^{\sharp}(\Omega)} \int |  f_{T}|^2 + R^{-950}  \|f_\tau\|_{L^2(S)}^2
\]
and by summing over $j$,
\begin{align*}
\sum_j \int |f_{\tau,j}^{\sharp}|^2 &\lesssim  \sum_{\Omega: \Omega \cap \tau \neq \emptyset} \sum_j  \sum_{T \in \mathbb T_j^{\sharp}(\Omega)} \int |  f_{T}|^2 + R^{-900}  \|f_\tau\|_{L^2(S)}^2\\
&\lesssim \sum_{\Omega: \Omega \cap \tau \neq \emptyset} \sum_{T \in \mathbb T(\Omega)} \sum_{j: T \in \mathbb T_j^{\sharp}(\Omega) } \int |  f_{T}|^2 + R^{-900}  \|f_\tau\|_{L^2(S)}^2.
\end{align*} 
By Lemma \ref{lem:geo1}, 
\begin{align*}
\sum_{j} \int |f_{\tau,j}^\sharp|^2
&\lesssim M^C \sum_{\Omega: \Omega \cap \tau \neq \emptyset} \sum_{T \in \mathbb T(\Omega)} \int |  f_{T}|^2 + R^{-900} \|f_\tau\|_{L^2(S)}^2 \\
&\lesssim M^C \sum_{T \in \mathbb T(\Omega):\omega(T) \in \tau} \int |  f_{T}|^2 + R^{-900} \|f_\tau\|_{L^2(S)}^2.
\end{align*}
By (4) of Proposition \ref{prop:wavepack}, we obtain \eqref{degCount2}.
\end{proof}
We plug \eqref{degCount2} into \eqref{sumBj}. Then we have
\begin{equation*} 
\sum_j \int_{B_j} |\mathbf{B}_{C\alpha} [Ef_{I,j}^{\sharp}]|^{p_0} \le C_\epsilon (R^{(1-\delta)(\epsilon+\delta_2)} M^{C} + R^{-1000})\bigg( \sum_\tau  \int_S |f_{\tau}|^2 \bigg)^{3/2+\epsilon},
\end{equation*}
and by summing over $I$,
\begin{equation*}
\sum_{j,I}\int_{B_j} |\mathbf{B}_{C\alpha} [Ef_{I,j}^{\sharp}]|^{p_0}
\le C_\epsilon (R^{\epsilon + \delta_2}   M^{C} R^{ -\delta(\epsilon+\delta_2)} + R^{-1000}) \Big( \sum_\tau \int_S |f_{\tau}|^2 \Big)^{3/2+\epsilon},
\end{equation*}
because the number of subcollections $I$ is at most $2^{K^2}$ which can be absorbed in $C_\epsilon$.
From \eqref{deltas} and \eqref{PolDeg}, we have \( M^{C} R^{ -\delta(\epsilon+\delta_2)}= R^{C\epsilon^4  -  \epsilon^3 - \epsilon^8} \le R^{-\epsilon^3/2} .\) For a sufficiently large $R$ we obtain \eqref{EstTrans}.

\subsubsection{Estimate for the tangential part}
Until now we reduced the problem by using the inductive argument. In this subsection we will directly estimate the remaining part. The key ingredient is the following geometric estimate. 
 
\begin{lem}[Lemma 3.6 in \cite{guth2015restriction}] \label{lem:numTanTube}
For each $j$, the number of different $\Omega$ with $\mathbb T_{j}^{\flat} \cap \mathbb T(\Omega) \neq \emptyset$ is at most $ R^{1/2+O(\delta)}$.
\end{lem}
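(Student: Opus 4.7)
I would reduce this directly to Lemma 3.6 of \cite{guth2015restriction} by verifying that its proof depends on $S$ only through a Gauss-map property that the hyperbolic paraboloid shares with the elliptic case. Concretely, Guth's argument uses nothing about the surface beyond the fact that distinct caps of diameter $R^{-1/2}$ have $R^{-1/2}$-separated unit normals on $S^2$ realized by a uniformly bi-Lipschitz Gauss map.

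For $S = \{(\xi_1,\xi_2,\xi_1\xi_2) : (\xi_1,\xi_2) \in D(1)\}$, the unit normal is
\[
n(\xi_1,\xi_2) = \frac{1}{\sqrt{1+\xi_1^2+\xi_2^2}}\,(\xi_2,\xi_1,-1),
\]
and the Gaussian curvature $K = -(1+\xi_1^2+\xi_2^2)^{-2}$ is bounded away from zero on $D(1)$. Hence the Gauss map is a diffeomorphism onto its image with Jacobian comparable to $1$, so caps $\Omega$ of diameter $R^{-1/2}$ map to essentially disjoint spherical disks of diameter $\sim R^{-1/2}$, which gives the required separation of the directions $n(\Omega)$.

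With this input I would run Guth's counting argument essentially unchanged. For each counted $\Omega$, fix $T_\Omega \in \mathbb T_j^{\flat} \cap \mathbb T(\Omega)$ and a non-singular witness $z_\Omega \in Z(P) \cap 2B_j \cap 10T_\Omega$ so that $n(\Omega)$ lies within angle $R^{-1/2+2\delta}$ of $T_{z_\Omega}Z(P)$. Equivalently, $n(\Omega)$ lies in an $R^{-1/2+2\delta}$-neighborhood of the great circle on $S^2$ orthogonal to the normal of $Z(P)$ at $z_\Omega$. A Bezout-type bound, controlled by the degree $M = R^{\delta_1}$ of $P$ and applied to the degree-$M$ algebraic surface $Z(P) \cap 2B_j$, bounds the total spherical measure of the union of such thickened great circles (as $z$ ranges over $Z(P) \cap 2B_j$) by $R^{-1/2 + O(\delta)}$. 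Dividing by the $\sim R^{-1}$ spherical area consumed by each $R^{-1/2}$-separated direction $n(\Omega)$ yields the stated bound $R^{1/2+O(\delta)}$.

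The main thing to check is that no step of Guth's polynomial/incidence argument covertly exploits the ellipticity of $S$. This is the case because the polynomial input depends only on $Z(P)$, which is independent of $S$, while $S$ enters the argument only through the Gauss-map bound established above. I therefore expect the proof to transfer without essential modification; the only nontrivial verification is the elementary curvature computation identifying $n(\cdot)$ as a bi-Lipschitz diffeomorphism on $D(1)$.
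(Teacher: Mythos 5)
Your proposal is correct and matches the paper's approach: the paper cites Guth's Lemma 3.6 directly without reproving it, and you correctly identify that the transfer is legitimate because Guth's counting argument touches the surface $S$ only through $R^{-1/2}$-separation of the normals $n(\Omega)$, which for the hyperbolic paraboloid follows from $|K| = (1+\xi_1^2+\xi_2^2)^{-2} \sim 1$ on $D(1)$ (the polynomial/Wongkew-type measure bound being a statement purely about $Z(P)$). Your curvature computation and normal-vector formula agree with the conventions used elsewhere in the paper.
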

This lemma implies that $\mathbb T_j^\flat$ is made up of tubes in only $R^{1/2+O(\delta)}$ different directions.	
To prove \eqref{WEST} we will show that
\[ 
K^{400} \sum_j\int_{B_j \cap W} \mathrm{Bil}(Ef_{j}^{\flat})^{p_0}
\le C_\epsilon R^\epsilon R^{\delta_2}  \Big( \sum_{\tau} \int_S |f_\tau|^2 \Big)^{3/2}.
\]
Since the number of cubes $B_j$ is $\lesssim R^{C\delta}$, there is a cube $B_j$ such that
\[ 
K^{400} \sum_j\int_{B_j \cap W} \mathrm{Bil}(Ef_{j}^{\flat})^{p_0}
\le C_\epsilon R^{C\delta} \int_{B_j \cap W} \mathrm{Bil}(Ef_{j}^{\flat})^{p_0},
\] where $K^{400}$ can be absorbed in $C_\epsilon$.
Since $R^{C\delta} \le R^{\epsilon}$ by \eqref{deltas},
it suffices to show that for each $j$,
\begin{equation} \label{BILEST}
\int_{B_j \cap W} \mathrm{Bil}(Ef_{j}^{\flat})^{p_0} \le C_\epsilon R^{C\delta} \Big( \sum_{\tau} \int_S |f_\tau|^2 \Big)^{3/2}.
\end{equation}

Decompose \( B_j \cap W \) into finer cubes $Q$ of side-length \( R^{1/2} \). 
We define $\mathbb T_{1,Q}^{\flat}$ and $\mathbb T_{2,Q}^{\flat}$ by
\begin{align*}
\mathbb T_{1,Q}^{\flat} &=\{ T \in \mathbb T_{j}^{\flat}: T \cap Q \neq \emptyset,~ \omega(T) \in \tau_1 \}, \\
\mathbb T_{2,Q}^{\flat} &=\{ T \in \mathbb T_{j}^{\flat}: T \cap Q \neq \emptyset,~ \omega(T) \in \tau_2 \}. 
\end{align*}
We first show the orthogonality among the bilinear wave packets  $Ef_{T_1}Ef_{T_2}$ for $T_1 \in \mathbb T_{1,Q}^\flat$ and $T_2 \in \mathbb T_{2,Q}^\flat$.  
We can see that the tubes in $\mathbb T_{1,Q}^\flat \cup \mathbb T_{2,Q}^\flat$ are contained in a $O(R^{1/2+\delta})$-neighborhood of some tangent plane. So, the orthogonal property observed in the proof of the 2-dimensional restriction theorem can be obtained.  
\begin{lem} \label{lem:Cord}
Let us set $F_T  = Ef_T$. 
Suppose that $\tau_1$ and $\tau_2$ satisfy the condition that for any $(\xi_1,\zeta_1) \in \bar\tau_1$ and any $(\xi_2,\zeta_2) \in \bar\tau_2$, 
\begin{equation} \label{cond:vw}
|\xi_1 - \xi_2| \gtrsim K^{-1} \quad \text{and} \quad |\zeta_1 - \zeta_2 | \gtrsim K^{-1}.
\end{equation}
Then for any $Q$ intersecting $B_j \cap W$, 
\begin{equation} \label{squreF}
\int \Big|\sum_{T_1 \in \mathbb T_{1,Q}^{\flat}} \sum_{T_2 \in \mathbb T_{2,Q}^{\flat}} 
F_{T_1} F_{T_2} \Big|^2
\lesssim R^{C\delta} \sum_{T_1 \in \mathbb T_{1,Q}^{\flat}} \sum_{T_2 \in \mathbb T_{2,Q}^{\flat}} 
 \int |F_{T_1} F_{T_2}|^2.
\end{equation}
\end{lem}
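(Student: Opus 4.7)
The plan is to carry out a C\'ordoba-Fefferman style $L^2$ orthogonality via Plancherel, combined with a geometric overlap count adapted to the hyperbolic surface. Write $G = \sum_{T_1,T_2} F_{T_1} F_{T_2}$ and expand
\[
\int |G|^2 = \sum_{(T_1,T_2),(T_1',T_2')} \int F_{T_1} F_{T_2}\, \overline{F_{T_1'} F_{T_2'}}.
\]
Since $F_T$ has Fourier transform supported in $3\Omega_T \subset S$ (essentially the measure $f_T\, d\sigma$), the product $F_{T_1} F_{T_2}$ has Fourier support in the Minkowski sum $3\Omega_{T_1} + 3\Omega_{T_2}$. By Plancherel, every cross term whose two Minkowski sums fail to intersect vanishes, and Cauchy-Schwarz on the surviving terms gives
\[
\int |G|^2 \lesssim N \sum_{T_1, T_2} \int |F_{T_1} F_{T_2}|^2,
\]
where $N$ is the maximum, over $(T_1,T_2)$, of the number of $(T_1',T_2')$ with $(3\Omega_{T_1'}+3\Omega_{T_2'})\cap(3\Omega_{T_1}+3\Omega_{T_2})\neq\emptyset$. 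The task thus reduces to showing $N \lesssim R^{C\delta}$.

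The key geometric input is quantitative transversality of the sum map on $S$ together with the tangential restriction. Parametrizing $S$ by $(v,w)\mapsto(v,w,vw)$, the differential of
\[
\Phi:(v_1,w_1,v_2,w_2)\longmapsto(v_1+v_2,\ w_1+w_2,\ v_1 w_1 + v_2 w_2)
\]
has all $3\times 3$ minors equal, up to sign, to $v_1-v_2$ or $w_1-w_2$; under the hypothesis \eqref{cond:vw} every such minor is $\sim K^{-1}$, so $\Phi$ is a submersion with a uniform non-degeneracy scale $K^{-O(1)}$. Moreover, because $T_i \in \mathbb T_{i,Q}^{\flat}$, the cap center $\omega(T_i)$ lies on the (approximately) $1$-dimensional locus of cap centers whose normal is tangent to $Z(P)$ inside the slab that locally contains $Q$; within $\tau_i$ this locus carries $\lesssim K^{-1} R^{1/2+O(\delta)}$ cap centers (cf.\ Lemma \ref{lem:numTanTube}). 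Restricting $\Phi$ to the resulting $2$-dimensional grid of pairs of centers, quantitative transversality forces the pre-image of any $R^{-1/2}$-ball to contain only $\lesssim R^{O(\delta)}$ such pairs, yielding $N \lesssim R^{C\delta}$.

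The main difficulty I expect is to make the restriction of $\Phi$ to the product of the two tangential $1$-dimensional loci rigorously non-degenerate at the scale $K^{-O(1)}$. Both halves of the separation are indispensable: dropping either of $|v_1-v_2|\sim K^{-1}$ or $|w_1-w_2|\sim K^{-1}$ collapses one of the minors above, and a direction in the image of $\Phi$ becomes parallel to another, blowing up the multiplicity. Once this non-degeneracy is in hand, the remaining combinatorial count, the Schwartz-tail slack inherent in the $R^{1/2+\delta}$-radius wave packets (which effectively thickens each cap by $R^{O(\delta)}$), and the Cauchy-Schwarz step are all routine and produce the claimed bound \eqref{squreF}.
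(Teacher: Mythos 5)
Your overall plan is correct and, after unwinding the language, is essentially a repackaging of the paper's argument: expand $\int|G|^2$ by Parseval so that a cross term survives only when the Minkowski sums of the two pairs of caps meet, then bound the overlap count $N$ using geometry, then finish with Cauchy--Schwarz. The reformulation of the surviving-term condition as ``$\Phi(\omega(T_1'),\omega(T_2'))$ lies in an $R^{-1/2}$-ball around $\Phi(\omega(T_1),\omega(T_2))$'' is a clean way to think about \eqref{trel}--\eqref{vvv}, and your computation that all $3\times 3$ minors of $d\Phi$ equal $\pm(v_1-v_2)$ or $\pm(w_1-w_2)$ is accurate.

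However, there is a genuine gap exactly where you flag ``the main difficulty I expect.'' The submersivity of $\Phi$ by itself is useless: the fiber of an $R^{-1/2}$-ball under $\Phi$ is a full $1$-parameter family of pairs of caps, about $R^{1/2}$ of them, which would give $N\sim R^{1/2}$ rather than $R^{O(\delta)}$. What actually saves the count is that $\mathbb T_{1,Q}^{\flat}\cup\mathbb T_{2,Q}^{\flat}$ lies in the $O(R^{1/2+\delta})$-neighborhood of a single plane $\Pi$, so the allowable cap centers lie on two thickened line segments in $(v,w)$-space, and one must check that the product of these two segments is \emph{transverse} to the kernel of $d\Phi$. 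This is precisely the content of the paper's computation after \eqref{con:line}: the constraint coming from the convolution support confines $(v_1,w_1)$ to the neighborhood of a line $\ell(T_1',T_2)$ with normal $(w_2-w_1',\,v_2-v_1')$, while the tangential constraint confines it to the neighborhood of a line $l$ with normal $(w_2-w_1',\,-(v_2-v_1'))$, and the separation hypothesis \eqref{cond:vw} makes these two normals quantitatively non-parallel. Without carrying out something equivalent to this, the bound $N\lesssim R^{C\delta}$ is unproved. Note also that your stated reason why both halves of \eqref{cond:vw} are needed --- ``collapses one of the minors above'' --- is not quite on target: if only $v_1=v_2$ the minors $\pm(w_1-w_2)$ are still of size $K^{-1}$, so $\Phi$ remains a non-degenerate submersion; what actually fails is the transversality between $\ker d\Phi$ and the tangent space of the locus (equivalently, the two lines $\ell(T_1',T_2)$ and $l$ become parallel). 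So the indispensability of both separations lives in the restricted map, not in $\Phi$ itself, and the explicit verification of this restricted non-degeneracy is the missing core of the proof.
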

\begin{proof}
One can write as
\[ 
\int \Big|\sum_{T_1 \in \mathbb T_{1,Q}^{\flat}} \sum_{T_2 \in \mathbb T_{2,Q}^{\flat}} 
F_{T_1} F_{T_2} \Big|^2 = 
\sum_{T_1, T_1' \in \mathbb T_{1,Q}^{\flat}} \sum_{T_2,T_2'\in \mathbb T_{2,Q}^{\flat}} 
\langle F_{T_1} F_{T_2}, F_{T_1'} F_{T_2'}  \rangle.
\]
By Parseval's identity, 
\[ 
\langle F_{T_1} F_{T_2}, F_{T_1'} F_{T_2'}  \rangle
=\langle \widehat F_{T_1} \ast \widehat F_{T_2}, \widehat F_{T_1'} \ast \widehat F_{T_2'}  \rangle.
\]
We now consider the supports of $\widehat F_{T_1} \ast \widehat F_{T_2}$.
Recall that $Ef$ can be written as $( f_T d\sigma_S)^{\spcheck}$. By (1) of Proposition \ref{prop:wavepack} we have $Ef_T = ( \chi_{3\Omega} f_T d\sigma_S)^{\spcheck}$ provided $T \in \mathbb T(\Omega)$. So, $\widehat{F_T}$ is supported in the $O(R^{-1/2})$-neighborhood of $\omega(T)$. 
Let $\omega(T_j) = (\xi_j,\zeta_j,\xi_j\zeta_j)$ for $j=1,2$.
If the above equation does not vanish, then the following relations are satisfied: 
\begin{align}
\xi_1 + \xi_2 &= \xi_1' + \xi_2' + O(R^{-1/2}), \nonumber \\
\zeta_1 + \zeta_2 &= \zeta_1' + \zeta_2' + O(R^{-1/2}), \nonumber  \\
\xi_1 \zeta_1 +  \xi_2 \zeta_2 &= \xi_1' \zeta_1' +  \xi_2' \zeta_2' + O(R^{-1/2}).
\label{trel} 
\end{align}
From these relations it follows that if $(\xi_1,\zeta_1), (\xi_2,\zeta_2)$ and $(\xi_1',\zeta_1')$ are given, then $(\xi_2',\zeta_2')$ is determined as follows:
\begin{equation} \label{vvv}
(\xi_2',\zeta_2')=(\xi_1+\xi_2-\xi_1',\zeta_1+\zeta_2-\zeta_1')+O(R^{-1/2}).
\end{equation}
So, if we set $\omega_2' = \omega(T_1) + \omega(T_2) - \omega(T_1')$, then 
\[ 
\sum_{T_1, T_1' \in \mathbb T_{1,Q}^{\flat}} \sum_{T_2,T_2'\in \mathbb T_{2,Q}^{\flat}} 
\langle F_{T_1} F_{T_2}, F_{T_1'} F_{T_2'}  \rangle
= \sum_{T_1, T_1' \in \mathbb T_{1,Q}^{\flat}} \sum_{T_2  \in \mathbb T_{2,Q}^{\flat}} \sum_{\substack{T_2'  \in \mathbb T_{2,Q}^{\flat}: \\ |\omega(T_2')-  \omega_2'| 
\lesssim R^{-1/2} }} \langle F_{T_1} F_{T_2}, F_{T_1'} F_{T_2'}  \rangle.
\]
Note that the number of choice of $T_2'$ is $O(1)$, because $T_2'$ passes through $Q$.

Now we restrict $(\xi_1,\zeta_1)$ when $(\xi_1',\zeta_1')$ and $(\xi_2,\zeta_2)$ are given. For this we insert \eqref{vvv} into \eqref{trel}. By rearranging this, we obtain
\begin{equation} \label{con:line}
(\xi_1 - \xi_1')(\zeta_2 - \zeta_1') + (\zeta_1-\zeta_1')(\xi_2 - \xi_1') = O(R^{-1/2}).
\end{equation}
Let $\ell(T_1',T_2)$ be the line passing through $(\xi_1',\zeta_1')$ and  of direction normal to the vector $(\zeta_2 - \zeta_1', \xi_2 - \xi_1')$. Then
from \eqref{con:line} it follows that if $(\xi_1',\zeta_1')$ and $(\xi_2,\zeta_2)$ are given, then  $(\xi_1,\zeta_1)$ is contained in a $O(R^{-1/2})$-neighborhood of the line $\ell(T_1',T_2)$. 
Thus, it implies
\begin{multline*}
\sum_{T_1, T_1' \in \mathbb T_{1,Q}^{\flat}} \sum_{T_2,T_2'\in \mathbb T_{2,Q}^{\flat}} 
\langle F_{T_1} F_{T_2}, F_{T_1'} F_{T_2'}  \rangle \\
= \sum_{T_1' \in \mathbb T_{1,Q}^{\flat}} \sum_{T_2 \in \mathbb T_{2,Q}^{\flat}}  \sum_{\substack{T_1 \in \mathbb T_{1,Q}^{\flat} : \\ \dist(\omega(T_1),\ell(T_1',T_2)) \lesssim R^{-1/2}}} \sum_{\substack{T_2'  \in \mathbb T_{2,Q}^{\flat}: \\ |\omega(T_2')-  \omega_2'| 
		\lesssim R^{-1/2} }}
\langle F_{T_1} F_{T_2}, F_{T_1'} F_{T_2'}  \rangle.
\end{multline*}

We see that all tube segments $T \cap B(R)$ for $T \in \mathbb T_{1,Q}^\flat \cup \mathbb T_{2,Q}^\flat$ are contained in the $R^{1/2+C\delta}$-neighborhood of some plane.
So, all directions 
$v(T)$ for $T \in \mathbb T_{1,Q}^\flat \cup \mathbb T_{2,Q}^\flat$ are also contained in the $R^{-1/2+C\delta}$-neighborhood of a plane $\pi$ passing through the origin. 
If $\omega(T)=(\xi,\zeta,\xi \zeta)$ then we can see 
\[
v(T) = \frac{1}{\sqrt{\xi^2+\zeta^2+1}}(\zeta,\xi,-1).
\] 
So, we can correspond $v(T)$, $T \in \mathbb T_{1,Q}^\flat \cup \mathbb T_{2,Q}^\flat$, to a point $(\zeta,\xi,-1)$.
\begin{center}
\begin{tikzpicture}[scale = 1]
\draw[->,semithick] (0,0) -- (4,0);
\draw[->,semithick] (0,0) -- (-2,-1);
\draw[->,semithick] (0,0) -- (0,3);
\draw[dashed] (0,-3) -- (0,0);
\draw[semithick] (-1.5,-0.5) -- (4.5,-0.5);
\draw[semithick] (-2.5,-2) -- (3.5,-2);
\draw (-0.3,-1.2) node {$-1$};
\draw[semithick] (-3,1) -- (3.5,-3);
\draw[semithick] (-3+1,1+2) -- (3.5+1,-3+2);
\draw[semithick] (-3,1) -- (-3+1,1+2);
\draw[semithick] (3.5,-3) -- (3.5+1,-3+2);
\draw (2.2,0.7) node {$\pi$};
\draw[dashed] (1.9,-2) -- (3.7,-0.5);
\draw (2,-1.1) node {$(\zeta,\xi,-1)$};
\draw[->,semithick,color=gray] (0,0) -- (3,-1);
\end{tikzpicture}
\end{center}
By considering the mapping 
\[
\frac{1}{\sqrt{\xi^2+\zeta^2+1}}(\zeta,\xi,-1) \to (\xi,\zeta),
\]
it follows that $(\xi_1,\zeta_1)$ is in the $O(R^{-1/2+\delta})$-neighborhood of the line $l$  passing through $(\xi_1',\zeta_1')$ and $(\xi_2,\zeta_2)$.
On the other hand, $(\xi_1,\zeta_1)$ obeys \eqref{con:line}. Thus, $(\xi_1,\zeta_1)$ is contained in the intersection between the  $R^{-1/2+C\delta}$-neighborhood of $l$ and the $O(R^{-1/2})$-neighborhood of $\ell(T_1',T_2)$.

Now we consider the directions of $l$ and $\ell(T_1',T_2)$. The direction of $l$ is normal to the vector $(\zeta_2-\zeta_1', - \xi_2+ \xi_1')$, and that of $\ell(T_1',T_2)$ is normal to $(\zeta_2-\zeta_1', \xi_2-\xi_1')$.
The condition \eqref{cond:vw} guarantees that the two vectors $(\zeta_2-\zeta_1', - \xi_2+ \xi_1')$ and $(\zeta_2-\zeta_1', \xi_2-\xi_1')$ are transverse. This means that the directions of $l$ and $\ell(T_1',T_2)$ are also transverse, and thus $(\xi_1,\zeta_1)$ is contained in a disc of radius $R^{-1/2+C\delta}$. Since all tubes are passing through $Q$, we conclude that the number of choice of $T_1$ is $O(R^{C\delta})$. 

Accordingly, it follows that
\[
\sum_{T_1, T_1' \in \mathbb T_{1,Q}^{\flat}} \sum_{T_2,T_2'\in \mathbb T_{2,Q}^{\flat}} 
\langle F_{T_1} F_{T_2}, F_{T_1'} F_{T_2'}  \rangle 
\lesssim  R^{C\delta} \sum_{T_1 \in \mathbb T_{1,Q}^{\flat}} \sum_{T_2 \in \mathbb T_{2,Q}^{\flat}} 
\int |F_{T_1} F_{T_2}|^2,
\]
which is \eqref{squreF}.
\end{proof}

Due to the orthogonality we can obtain the following estimate.

\begin{lem} \label{sppE}
Let $\tau_1$ and $\tau_2$ be as in Lemma \ref{lem:Cord}. Then for any $Q$ intersecting $B_j \cap W$, 
\begin{multline} \label{Qest}
\int_Q |Ef_{\tau_1,j}^{\flat}|^2 |Ef_{\tau_2,j}^{\flat}|^2 \lesssim R^{C\delta} R^{-1/2} \Big( \sum_{T_1 \in \mathbb T_{1,Q}^{\flat} } \|f_{T_1}\|_{L^2(S)}^2  \Big) \Big( \sum_{T_2 \in \mathbb T_{2,Q}^{\flat} } \|f_{T_2}\|_{L^2(S)}^2  \Big) \\+ O(R^{-990}\|f\|_{L^2(S)}^2).
\end{multline}
\end{lem}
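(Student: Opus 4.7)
The plan is to reduce the left side to a sum of individual bilinear products $\int |Ef_{T_1}|^2|Ef_{T_2}|^2$ via the orthogonality of Lemma \ref{lem:Cord}, and then bound each such product by an $L^\infty$-times-volume argument whose critical input is a geometric estimate $|T_1 \cap T_2| \lesssim R^{3/2+O(\delta)}$ using both the tangential condition and the parameter separation \eqref{cond:vw}. I first split
\[
Ef_{\tau_k,j}^\flat = \sum_{T \in \mathbb T_{k,Q}^\flat} Ef_T + \sum_{\substack{T \in \mathbb T_j^\flat,\,\omega(T)\in\tau_k \\ T\cap Q = \emptyset}} Ef_T.
\]
Each tube in the second sum satisfies $|Ef_T(x)| \le R^{-1000}\|f\|_{L^2(S)}$ for $x \in Q \subset B(R)\setminus T$ by property (2) of Proposition \ref{prop:wavepack}, and summing over the $R^{O(1)}$ such tubes produces a contribution on $Q$ that is absorbed into the announced error $O(R^{-990}\|f_\tau\|_{L^2(S)})$.

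Having reduced the integrand on $Q$ to a product of sums over $\mathbb T_{k,Q}^\flat$, I enlarge the region of integration from $Q$ to $\mathbb R^3$ and apply Lemma \ref{lem:Cord}, whose hypothesis is precisely \eqref{cond:vw}, to obtain
\[
\int_Q \bigl|Ef_{\tau_1,j}^\flat\, Ef_{\tau_2,j}^\flat\bigr|^2 \lesssim R^{C\delta} \sum_{T_1 \in \mathbb T_{1,Q}^\flat} \sum_{T_2 \in \mathbb T_{2,Q}^\flat} \int |Ef_{T_1}|^2 |Ef_{T_2}|^2,
\]
up to the same error. For each pair, Cauchy--Schwarz on $S$ together with $|\mathrm{supp}\, f_{T_k}| \lesssim R^{-1}$ gives the sup bound $\|Ef_{T_k}\|_\infty \lesssim R^{-1/2}\|f_{T_k}\|_{L^2(S)}$, while property (2) of Proposition \ref{prop:wavepack} shows that $|Ef_{T_k}|$ is negligible outside $T_k$. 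Hence
\[
\int |Ef_{T_1}|^2 |Ef_{T_2}|^2 \lesssim R^{-2}\|f_{T_1}\|_{L^2(S)}^2 \|f_{T_2}\|_{L^2(S)}^2 \, |T_1 \cap T_2|,
\]
modulo a negligible contribution.

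The main obstacle is the geometric estimate $|T_1 \cap T_2| \lesssim R^{3/2+O(\delta)}$. As in the proof of Lemma \ref{lem:numTanTube}, the tangential condition forces both tubes into the $R^{1/2+O(\delta)}$-neighborhood of a common plane $\Pi$, so the depth of $T_1 \cap T_2$ transverse to $\Pi$ is at most $R^{1/2+O(\delta)}$. Within the plane $\pi$ parallel to $\Pi$, the parametrization $v(T) = (w,v,-1)/\sqrt{v^2+w^2+1}$ combined with $\omega(T)=(v,w,vw)$ and the separation \eqref{cond:vw} implies that the projections of $v(T_1)$ and $v(T_2)$ onto $\pi$ make an angle $\gtrsim K^{-1}$; the remaining $K$-dependent factor is absorbed in $R^{O(\delta)}$. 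These together confine $T_1 \cap T_2$ to a parallelepiped of volume $R^{3/2+O(\delta)}$, and substituting in the per-pair bound, then summing $\|f_{T_k}\|_{L^2(S)}^2$ over $T_k$ and combining with the $R^{C\delta}$ factor from Lemma \ref{lem:Cord}, yields the $R^{-1/2+C\delta}$ gain of \eqref{Qest}.
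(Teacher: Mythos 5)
Your first two steps match the paper: reduce $Ef^\flat_{\tau_k,j}$ on $Q$ to a sum over $\mathbb T^\flat_{k,Q}$ modulo a negligible error via property (2) of Proposition \ref{prop:wavepack}, and then invoke the $L^2$-orthogonality of Lemma \ref{lem:Cord}. Where you diverge is the per-pair estimate
\(
\int |Ef_{T_1}Ef_{T_2}|^2 \lesssim R^{-1/2}\|f_{T_1}\|_{L^2(S)}^2\|f_{T_2}\|_{L^2(S)}^2.
\)
The paper proves this on the Fourier side: Plancherel turns the left side into $\int |f_{T_1}d\sigma_{3\Omega_1}\ast f_{T_2}d\sigma_{3\Omega_2}|^2$, and the condition \eqref{cond:vw} gives the transversality estimate $\|d\sigma_{3\Omega_1}\ast d\sigma_{3\Omega_2}\|_\infty \lesssim R^{-1/2}$; interpolating against Young's $L^1$ bound gives the claim. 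You instead work in physical space: the Cauchy--Schwarz bound $\|Ef_{T_k}\|_\infty \lesssim R^{-1/2}\|f_{T_k}\|_{L^2(S)}$ (from $|3\Omega_k|\sim R^{-1}$), the off-tube decay of $Ef_{T_k}$, and the tube-intersection volume $|T_1\cap T_2|\lesssim KR^{3/2+O(\delta)}$ coming from the angular separation $\angle(v(T_1),v(T_2))\gtrsim K^{-1}$ implied by \eqref{cond:vw}. Both routes use \eqref{cond:vw} as the driving transversality and deliver the same power of $R$, so your argument is a legitimate alternative. Two small remarks. First, the appeal to the common tangent plane $\Pi$ is superfluous here: the $K^{-1}$ angular gap alone already pins $|T_1\cap T_2|$ down to $\lesssim KR^{3/2+3\delta}$, regardless of tangency to $Z(P)$. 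Second, there is a genuine but repairable technical point in your route that the paper's Plancherel computation sidesteps: after Lemma \ref{lem:Cord} the integral runs over all of $\mathbb R^3$, but property (2) of Proposition \ref{prop:wavepack} only controls $|Ef_T|$ off $T$ for $x\in B(R)$, so the contribution from $|x_3|>R$ needs a separate (if routine) decay argument; the Fourier-side identity $\int|Ef_{T_1}Ef_{T_2}|^2=\int|f_{T_1}d\sigma\ast f_{T_2}d\sigma|^2$ is exact on $\mathbb R^3$ and avoids this.
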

\begin{proof}
By (3) of Proposition \ref{prop:wavepack}, we have that for $k=1,2$,
\[
Ef_{\tau_k,j}^{\flat} = \sum_{T \in \mathbb T_{k,Q}^{\flat}} Ef_T + O(R^{-990}\|f\|_{L^2(S)}).
\] 
Substituting this in the left-hand side of \eqref{Qest} and applying Lemma \ref{lem:Cord}, we get
\begin{equation} \label{cord}
\int_Q |Ef_{\tau_1,j}^{\flat}|^2 |Ef_{\tau_2,j}^{\flat}|^2 \le 
R^{C\delta} \sum_{T_1\in \mathbb T_{1,Q}^{\flat}} \sum_{ T_2\in \mathbb T_{2,Q}^{\flat}}  \int |Ef_{T_1} Ef_{T_2} |^2 + O(R^{-1500}\|f\|_{L^2(S)}^4),
\end{equation}
where we use a trivial estimate $\|Ef\|_{L^2(B(R))} \lesssim R^C\|f\|_{L^2(S)}$ (or \eqref{Est22} below).
Now it suffices to show that
\begin{equation} \label{elementEST}
\int |Ef_{T_1} Ef_{T_2} |^2 \lesssim R^{-1/2} \|f_{T_1}\|_{L^2(S)}^2 \|f_{T_2}\|_{L^2(S)}^2.
\end{equation}
By the Plancherel theorem and (1) of Proposition \ref{prop:wavepack},
\begin{equation*}
\int |Ef_{T_1} Ef_{T_2} |^2 = 
\int |f_{T_1}d\sigma_{3\Omega_1} \ast f_{ T_2}d\sigma_{3\Omega_2}|^2.
\end{equation*}
From the condition \eqref{cond:vw}, we see that the unit normal vectors $n(\Omega_1)$ and $n(\Omega_2)$ are transverse, and thus the translations of $\Omega_1$ meet $\Omega_2$ transversally. From this observation it follows that \( \| d\sigma_{3\Omega_1} \ast d\sigma_{3\Omega_2} \|_\infty \lesssim R^{-1/2} \). 
Using this we have
\[
\| f_{T_1}d\sigma_{3\Omega_1}  \ast f_{ T_2}d\sigma_{3\Omega_2} \|_\infty \le CR^{-1/2}\|f_{T_1}\|_{L^\infty(S)} \|f_{T_2}\|_{L^\infty(S)}.
\]
On the other hand, by Young's inequality it gives
\[
\| f_{T_1}d\sigma_{3\Omega_1}  \ast f_{T_2}d\sigma_{3\Omega_2} \|_1 \le \|f_{T_1}\|_{L^1(S)} \|f_{T_2}\|_{L^1(S)}.
\]
By interpolating these two estimates, 
\[
\|f_{T_1}d\sigma_{3\Omega_1}   \ast f_{ T_2}d\sigma_{3\Omega_2} \|_2 \lesssim R^{-1/4} \| f_{ T_1}\|_{L^2(S)} \| f_{T_2}\|_{L^2(S)},
\]
and so
\[ 
\| Ef_{T_1} Ef_{T_2} \|_2 \lesssim R^{-1/4} \| f_{ T_1}\|_{L^2(S)} \| f_{T_2}\|_{L^2(S)},
\]
which is \eqref{elementEST}.
\end{proof}

Now we sum \eqref{Qest} over $Q$. For this we use the way of dealing with two dimensional Kakeya set (see \cite{cordoba1982geometric}).
By simple calculation we know that if $T_1 \in \mathbb T_{1,Q}^\flat$ and $T_2 \in \mathbb T_{2,Q}^{\flat}$ then 
\begin{equation} \label{tritranstube}
\int  \chi_{T_1} \chi_{T_2} \sim K R^{3/2 + 3\delta},
\end{equation}
because we have $|v(T_1) - v(T_2)| \sim K^{-1}$ by \eqref{cond:vw}.

Inserting $K^{-1}R^{-3/2} \int \chi_{T_1} \chi_{T_2}$ into the right-hand side of \eqref{Qest}, we have
\begin{multline*}
\int_Q |Ef_{\tau_1,j}^{\flat}|^2 |Ef_{\tau_2,j}^{\flat}|^2  \\
\lesssim K^{-1} R^{C\delta} R^{-2}  \sum_{T_1 \in \mathbb T_{1,Q}^{\flat} } \sum_{T_2 \in \mathbb T_{2,Q}^{\flat} } \|f_{T_1}\|_{L^2(S)}^2    \|f_{T_2}\|_{L^2(S)}^2  \int  \chi_{T_1} \chi_{T_2}+ O(R^{-1500}\|f\|_{L^2(S)}^4).
\end{multline*}
Using $\int \chi_{T_1} \chi_{T_2} \lesssim R^{C\delta} \int_{KQ} \chi_{T_1} \chi_{T_2}$ we rewrite this as 
\begin{multline*}
\int_Q |Ef_{\tau_1,j}^{\flat}|^2 |Ef_{\tau_2,j}^{\flat}|^2 \\
\lesssim K^{-1} R^{C\delta} R^{-2}  \sum_{T_1 \in \mathbb T_{1,j}^{\flat} } \sum_{T_2 \in \mathbb T_{2,j}^{\flat} } \|f_{T_1}\|_{L^2(S)}^2    \|f_{T_2}\|_{L^2(S)}^2  \int_{KQ}  \chi_{T_1} \chi_{T_2}+ O(R^{-1500}\|f\|_{L^2(S)}^4)
\end{multline*}
where
$
\mathbb T_{1,j}^{\flat} =\{ T \in \mathbb T_{j}^{\flat}:  \omega(T) \in \tau_1 \}$ and
$\mathbb T_{2,j}^{\flat} =\{ T \in \mathbb T_{j}^{\flat}: \omega(T) \in \tau_2 \}$. 
\smallskip
\\
We sum the above estimate over $Q$ with $Q \cap B_j \cap W \neq \emptyset$, and insert \eqref{tritranstube}. Then
\begin{align*}
\int_{B_j \cap W} & |Ef_{\tau_1,j}^{\flat}|^2 |Ef_{\tau_2,j}^{\flat}|^2 \\
&\lesssim  K^C  R^{C\delta} R^{-2}\sum_{T_1 \in \mathbb T_{1,j}^{\flat} } \sum_{T_2 \in \mathbb T_{2,j}^{\flat} } \|f_{T_1}\|_{L^2(S)}^2    \|f_{T_2}\|_{L^2(S)}^2  \int  \chi_{T_1} \chi_{T_2}+ O(R^{-1000}\|f\|_{L^2(S)}^4)\\
&\lesssim K^C R^{C\delta} R^{-1/2}  \Big( \sum_{T_1 \in \mathbb T_{1,j}^{\flat} } \|f_{T_1}\|_{L^2(S)}^2 \Big) \Big(  \sum_{T_2 \in \mathbb T_{2,j}^{\flat} }    \|f_{T_2}\|_{L^2(S)}^2  \Big) + O(R^{-1000}\|f\|_{L^2(S)}^4) \\
&\lesssim K^C R^{C\delta} R^{-1/2}   \|f_{\tau_1,j}^\flat\|_{L^2(S)}^2  \|f_{\tau_2,j}^\flat\|_{L^2(S)}^2   + O(R^{-900}\|f\|_{L^2(S)}^4). 
\end{align*}
Here, the (5) of Proposition \ref{prop:wavepack} is used in the last line. 
So, it implies
\begin{align*}
\int_{B_j \cap W} \mathrm{Bil}(Ef_j^\flat)^4 
&\lesssim K^C R^{C\delta} R^{-1/2}   
\sum_{\tau_1}  \|f_{\tau_1,j}^\flat\|_{L^2(S)}^2  \sum_{\tau_2} \|f_{\tau_2,j}^\flat\|_{L^2(S)}^2 + K^2 R^{-900}\|f\|_{L^2(S)}^4 \\
&\lesssim K^C R^{C\delta} R^{-1/2}   
\Big( \sum_{\tau}  \|f_{\tau,j}^\flat\|_{L^2(S)}^2 \Big)^2 + K^2R^{-900}\|f\|_{L^2(S)}^4. 
\end{align*}
Therefore, we obtain
\begin{equation} \label{bilEST1}
\| \mathrm{Bil}(Ef_{j}^{\flat})\|_{L^4(B_j \cap W)} 
\le C_\epsilon \big( R^{C\delta}  R^{-1/8} \Big( \sum_{\tau}\|f_{\tau,j}^{\flat}\|_{L^2(S)}^2 \Big)^{1/2}+ R^{-200} \|f\|_{L^2(S)} \big).
\end{equation}

On the other hand, by Plancherel's theorem it follows that
\[
\int_{-R/2}^{R/2}\int_{D(R)} |Ef(x',x_3)|^2 dx' dx_3  \lesssim R \| f\|_{L^2(S)}^2.
\]
This is written as
\begin{equation} \label{Est22}
\| Ef \|_{L^2(B(R))} \lesssim R^{1/2}  \| f\|_{L^2(S)}.
\end{equation}
By the Cauchy-Schwarz inequality and the above estimate,
\[
\| \mathrm{Bil}(Ef_{j}^{\flat}) \|_{L^2(B_j \cap W)} ^2 
\lesssim K^C \sum_{\tau_1} \sum_{\tau_2} \int_{B(R)} |Ef_{\tau_1,j}^{\flat}| |Ef_{\tau_2,j}^{\flat}| 
\lesssim K^{C'} R  \sum_{\tau} \|f_{\tau,j}^{\flat}\|_{L^2(S)}^2 .
\]
Thus,
\begin{equation} \label{bilEST2}
\| \mathrm{Bil}(Ef_{j}^{\flat}) \|_{L^2(B_j \cap W)} \le C_\epsilon R^{1/2} (\sum_{\tau} \|f_{\tau,j}^{\flat}\|_{L^2(S)}^2 )^{1/2}.
\end{equation}
We now interpolate \eqref{bilEST1} with \eqref{bilEST2}. Then it follows that for all \( 3 \le p \le 4, \)
\begin{equation} \label{resInt}
\| \mathrm{Bil}(Ef_{j}^{\flat})\|_{L^p(B_j \cap W)} 
\le C_\epsilon R^{C\delta}  R^{\frac{5}{2p} - \frac{3}{4}} \Big( \sum_{\tau}\|f_{\tau,j}^{\flat}\|_{L^2(S)}^2 \Big)^{1/2}+ O(R^{-100} \|f\|_{L^2(S)}).
\end{equation}

Let us consider $\sum_{\tau}\|f_{\tau,j}^{\flat}\|_{L^2(S)}^2$. We utilize a geometric estimate for the tangential tubes.
By (4) of Proposition \ref{prop:wavepack},  
\begin{equation*}
\sum_{\tau}  \int_S |f_{\tau,j}^{\flat}|^2 \lesssim \sum_{T \in \mathbb T_{j}^\flat} \int_S |f_{T}|^2 +  R^{-900} \sum_{\tau} \|f_{\tau} \|_{L^2(S)}^2.
\end{equation*}
By Lemma \ref{lem:numTanTube}, there is an $\Omega$ such that
\begin{align*}
\sum_{\tau}  \int_S |f_{\tau,j}^{\flat}|^2 
&\lesssim R^{1/2+C\delta} \sum_{T \in \mathbb T_{j}^\flat(\Omega)} \int_S |f_{T}|^2 + R^{-900} \sum_{\tau} \|f_{\tau} \|_{L^2(S)}^2 \\
&\lesssim R^{1/2+C\delta} \sum_{T \in \mathbb T(\Omega)} \int_S |f_{T}|^2 + R^{-900} \sum_{\tau} \|f_{\tau} \|_{L^2(S)}^2.
\end{align*}
Thus, by (5) of Proposition \ref{prop:wavepack} we have
\begin{align*}
\sum_{\tau}  \int_S |f_{\tau,j}^{\flat}|^2
\lesssim R^{1/2+C\delta} \int_{\Omega} |f|^2 + R^{-900} \sum_{\tau} \|f_{\tau} \|_{L^2(S)}^2. 
\end{align*}
From $
\oint_{\Omega} |f|^2 \lesssim 1$, we obtain
\begin{equation} \label{tangGeoEst}
\sum_{\tau}  \int_S |f_{\tau,j}^{\flat}|^2  \lesssim R^{1/2+C\delta} R^{-1} = R^{-1/2+C\delta}.
\end{equation}

By raising both sides of \eqref{resInt} to the $p$th power,
\[
\| \mathrm{Bil}(Ef_{j}^{\flat})\|_{L^p(B_j \cap W)}^p 
\lesssim C_\epsilon R^{C\delta}  R^{\frac{5}{2} - \frac{3p}{4}} \Big( \sum_{\tau}\|f_{\tau,j}^{\flat}\|_{L^2(S)}^2 \Big)^{p/2}+ R^{-300} \|f\|^p_{L^2(S)}.
\]
Using \eqref{tangGeoEst} we have the estimate
\begin{align*}
\Big( \sum_{\tau} \|f_{\tau,j}^{\flat}\|_{L^2(S)}^2 
\Big)^{p/2} 
&= \Big( \sum_{\tau} \|f_{\tau,j}^{\flat}\|_{L^2(S)}^2 
\Big)^{(p-3)/2} \Big( \sum_{\tau} \|f_{\tau,j}^{\flat}\|_{L^2(S)}^2 
\Big)^{3/2} \\
&\lesssim  R^{C\delta} R^{\frac{3}{4} - \frac{p}{4}} \Big( \sum_{\tau} \|f_{\tau,j}^{\flat}\|_{L^2(S)}^2 
\Big)^{3/2}.
\end{align*}
Therefore, it gives
\begin{align*}
\| \mathrm{Bil}(Ef_{j}^{\flat})\|_{L^{p}(B_j \cap W)}^{p} 
&\lesssim C_\epsilon R^{C\delta}  R^{\frac{13}{4} - {p}} \Big( \sum_{\tau}\|f_{\tau}\|_{L^2(S)}^2 \Big)^{3/2}+ R^{-300} \|f\|_{L^2(S)}^p. \\
&\lesssim C_\epsilon R^{C\delta}  R^{\frac{13}{4} - {p}} \|f\|_{L^2(S)}^3,
\end{align*}
where we used the estimate $\|f\|_{L^2(S)} \lesssim 1$.
By taking $p=p_0$ we finally obtain \eqref{BILEST}.

\section{Appendix}

In this section, we prove the wave packet decomposition.

\subsection{Proof of Proposition \ref{prop:wavepack}}

We first define bump functions. Let $\phi:\mathbb R^2 \mapsto \mathbb R$ be a nonnegative Schwartz function such that $\widehat \phi$ is supported in a disc $D(0,\frac{3}{2})$ and $\sum_{j \in \mathbb Z^2} \phi(x-j) =1$. Let $\psi:\mathbb R^2 \mapsto \mathbb R$ is a nonnegative smooth function that is equal to 1 on a disc $D(0,2)$ and supported in $D(0,3)$.
For a disc $D$ we define $a_D$ to be an affine map from the unit disc to $D$, and let $\phi_D = \phi \circ a_D^{-1}$ and $\psi_D = \psi \circ a_D^{-1}$.

By translating we may assume that $B(R)$ is centered at the origin.
We define $f_\Omega := f \chi_\Omega$ and  $\tilde f_{\bar\Omega} := \tilde f \chi_{\bar\Omega}$ where $\bar \Omega = \{\xi' \in \mathbb R^2 : (\xi',\xi_3) \in \Omega \}$. Then we have $\tilde f_{\bar\Omega}(\xi') = J(\xi') f_\Omega(\xi)$ where $J(\xi_1,\xi_2) := \frac{d\sigma(\xi_1,\xi_2)}{d\xi_1d\xi_2}$. Since $|J| \sim 1$ on $B(R)$, we may identify $\tilde f_{\bar\Omega}$ with $f_\Omega$.
For each $T \in \mathbb T(\Omega)$ we define $\tilde f_T$ by
\begin{equation}\label{def:f_T}
\tilde{f}_T := \psi_{\bar \Omega}(\widehat\phi_{D_T}*\tilde{f}_{\bar \Omega})
\end{equation}
where  $D_T = \{ x' \in \mathbb R^2 : (x',0) \in T\}$. From this definition it follows that $\tilde f_T$ is supported in $3\bar\Omega$.
We define $f_T$ on $S$ by $\tilde f_T(\xi') = J(\xi') f_T(\xi',\xi_1\xi_2)$. Then  $f_T$ has Property (1).

Consider Property (2). For $T \in \mathbb T(\Omega)$, we write 
\begin{align}
Ef_{T}(x',x_3) &= \int_{Q(1)} e^{ 2\pi i(x'\cdot \xi'+x_3 \xi_1\xi_2)} \tilde f_T(\xi') d \xi' \nonumber \\
&=\int e^{2\pi ix' \cdot \xi'} \Psi_{x_3}(\xi') (\widehat\phi_{D_T}*\tilde{f}_{\bar \Omega})(\xi') d \xi' \nonumber \\
&= \Psi_{x_3}^{\spcheck} \ast (\phi_{D_T}  \tilde f_{\bar \Omega}^{\spcheck})(x')  
\label{EfForm}
\end{align}
where $\Psi_{x_3}(\xi') := e^{2\pi i(\xi_1\xi_2 x_3)} \psi_{\bar \Omega}(\xi')$.

If $(\omega_1, \omega_2)$ be the center of $\bar \Omega$ then we can see that the normal vector $n(\Omega)$ is parallel to $(\omega_2,\omega_1,-1)$, so the tubes $T \in \mathbb T(\Omega)$ are written as
\begin{equation} \label{tubeEq}
T = \{ (x',x_3) :  | x'- x'_T + x_3 (\omega_2,\omega_1) | \lesssim R^{1/2+ \delta}  \}
\end{equation}
where $x'_T$ is the center of $D_T$.

Using integrating by parts, we can obtain that for $(x',x_3) \in \mathbb R^2 \times [-10R,10R]$,
\[
|\Psi^{\spcheck}_{x_3}(x')| \le C_M |\bar \Omega| (1+R^{-1/2}|x'+ x_3 (\omega_2,\omega_1)  |)^{-M}, \qquad \forall M > 0.
\]
By inserting this into \eqref{EfForm} we have that for $(x',x_3) \in \mathbb R^2 \times [-10R,10R]$,
\begin{align}
|E f_T(x',x_3)| &\le C_M |\bar \Omega| (1+R^{-1/2}|x'-x'_T+ x_3 (\omega_2,\omega_1) |)^{-M}
\int | \phi_{D_T} \tilde f_{\bar\Omega}^{\spcheck}(y') | dy' \nonumber\\
& \le C_M |\bar \Omega| |D_T|^{1/2} (1+R^{-1/2}|x'-x'_T+ x_3 (\omega_2,\omega_1) |)^{-M} \| \tilde f_{\bar\Omega} \|_2 \nonumber\\
&\le C_M R^{-1/2 +\delta} (1+R^{-1/2}|x'-x'_T+ x_3 (\omega_2,\omega_1) |)^{-M} \| f \|_{L^2(\Omega)}
\label{EfTest}
\end{align}
for any $M >0$.
Using this and \eqref{tubeEq} we have that for $x \in B(R) \setminus T$,
\[
|Ef(x)| \le C_M R^{-1/2+\delta} R^{-\delta M} \|f\|_{L^2(\Omega)} \lesssim R^{-1000} \|f\|_{L^2(\Omega)}
\]
provided $M>0$ is sufficiently large. Thus we have  Property  (2).

Let $\widetilde{\mathbb T}(\Omega)$ be the collection of cylindrical tubes of radius $R^{1/2+\delta}$ which are parallel to $n(\Omega)$ and cover $\mathbb R^3$ with finite overlap. By the definition \eqref{def:f_T} it follows that
\[
\tilde f = \sum_{\Omega} \sum_{T \in \widetilde{\mathbb T}(\Omega)} \tilde f_T.
\]
Since $Ef$ is a linear operator, we have
\[
Ef(x) =  \sum_{\Omega}\sum_{T \in \widetilde{\mathbb T}(\Omega)} Ef_{T}(x).
\]
If $x \in B(R)$ we can restrict $T \in \widetilde{\mathbb T}(\Omega)$ to $\mathbb T(\Omega)$ in the above summation, because by \eqref{EfTest} the contribution of $Ef_T$ for $T \in \widetilde{\mathbb T} \setminus \mathbb T$ is negligible. Thus, we have Property (3).

Consider Property (4). Let $T_1$ and $T_2$ be two disjoint tubes in $\mathbb T(\Omega)$. We see that $\big| \int f_{T_1} \overline{f_{T_2}} \big| \lesssim \big| \int \tilde f_{T_1}\overline{\tilde f_{T_2}} \big|$. We will use that $\tilde f_T$ is essentially Fourier supported in $2D_T$. By Parseval's identity, 
\begin{align}
\int \tilde f_{T_1}\overline{\tilde f_{T_2}} &= \int \big( {\psi_{\bar\Omega}^2}^{\spcheck} * ({\phi_{D_{T_1}}}{\tilde{f}}^{\spcheck}_{\bar\Omega}) \big) \overline{({\phi_{D_{T_2}}}{ \tilde{f}}_{\bar\Omega}^{\spcheck})} \nonumber\\
&\le \| \tilde{f}^{\spcheck}_{\bar\Omega} \|_{\infty}^2 \bigg| \int \big( {\psi_{\bar\Omega}^2}^{\spcheck} * \phi_{D_{T_1}} ) \overline{ \phi_{D_{T_2}} } \bigg| .
\label{ppoth}
\end{align}
Since $\phi$ and $\psi$ are smooth bump functions, we have $|\phi_{D}(x')| \lesssim  (1+\dist(x', D))^{-4000} $ and $|\psi_\Omega^{\spcheck}(x')| \lesssim (1+ R^{-1/2}|x'|)^{-4000}$.
Since $T_1$ and $T_2$ are disjoint, the distance between $D_{T_1}$ and  $D_{T_2}$ is $ \ge (1/4)R^{1/2+\delta}$, from which we have $\Big| \int \big({\psi_{\bar\Omega}^2}^{\spcheck} * \phi_{D_{T_1}} ) \overline{ \phi_{D_{T_2}} } \Big| \lesssim R^{-2000}$.
Using the Riemann-Lebesgue lemma and H\"older's inequality we have  $\| \tilde{f}^{\spcheck}_{\bar\Omega} \|_{\infty} \lesssim \|f\|_{L^2(\Omega)}$. 
Thus by inserting these estimates into \eqref{ppoth} we have Property (4).

Consider Property (5). It easily follows from  Property (4). Indeed, by (4),
\[
\sum_{T \in \mathbb T(\Omega)} \int_{S} |f_T|^2 \lesssim 
\int_{S} \Big| \sum_{T \in \mathbb T(\Omega)} f_T \Big|^2 + R^{-900}\int_{\Omega}|f|^2.
\]
Since
\(
\int_{S} \Big| \sum_{T \in \mathbb T(\Omega)} f_T \Big|^2 
\lesssim \int_S |f_\Omega|^2 = \int_{\Omega} |f|^2,
\)
we have Property (5).

Consider Property (6).
Let us denote by $V = \{ x' \in \mathbb R^2 : (x',x_3) \in B(\omega,R^{-1/2}) \cap S \}$. By the relation between $f$ and $\tilde f$ it suffices to show
\[
\Big\| \sum_{T \in \mathbb T': \omega(T) \in \tau} \tilde f_T \Big\|_{L^2(V)} \lesssim \| \tilde f_{\bar \tau}  \|_{L^2(\tilde V)}
\] where $\widetilde V = \{ x' \in \mathbb R^2 : (x',x_3) \in 10B(\omega,R^{-1/2}) \cap S \}$.
By Property (1),
\[
\Big\| \sum_{T \in \mathbb T': \omega(T) \in \tau} \tilde f_T \Big\|_{L^2(B)} \le \Big\| \sum_{\Omega: 3\bar\Omega \cap V \cap \bar\tau \neq \emptyset}\sum_{T \in \mathbb T'(\Omega)} \tilde f_T \Big\|_2.
\]
Since $\Omega$ are finitely overlapped, we have
\[
\Big\| \sum_{\Omega: 3\bar\Omega \cap V  \cap \bar \tau \neq \emptyset}\sum_{T \in \mathbb T'(\Omega)} \tilde f_T \Big\|_2^2 \lesssim 
\sum_{\Omega: 3\bar\Omega \cap V \cap \bar\tau \neq \emptyset} \Big\| \sum_{T \in \mathbb T'(\Omega)} \tilde f_T \Big\|_2^2.
\]
By Property (4) this is bounded by
\[
\lesssim \sum_{\Omega: 3\bar\Omega \cap V \cap \bar\tau  \neq \emptyset} \sum_{T \in \mathbb T'(\Omega)} \| \tilde f_T \|_2^2 + R^{-900} \sum_{\Omega: 3\bar\Omega \cap V  \cap \bar\tau \neq \emptyset} \| \tilde f \|_{L^2(\bar \Omega)}^2.
\]
Now we replace $\mathbb T'$ with $\mathbb T$. Then the above is bounded by 
\[
\sum_{\Omega: 3\bar\Omega \cap V \cap \bar\tau  \neq \emptyset} \sum_{T \in \mathbb T(\Omega)} \| \tilde f_T \|_2^2 + R^{-900} \sum_{\Omega: 3\bar\Omega \cap V  \cap \bar\tau \neq \emptyset} \| \tilde f \|_{L^2(\bar\Omega)}^2. 
\]
By using (4) again, this is 
\begin{align*}
&\lesssim \sum_{\Omega: 3\bar\Omega \cap V \cap \bar\tau  \neq \emptyset} \Big\|  \sum_{T \in \mathbb T(\Omega)}  \tilde f_T \Big\|_2^2 + R^{-900} \sum_{\Omega: 3\bar\Omega \cap V  \cap \bar\tau \neq \emptyset} \| \tilde f \|_{L^2(\bar\Omega)}^2 \\
&\lesssim \sum_{\Omega: 3\bar\Omega \cap V \cap \bar\tau \neq \emptyset} \| \tilde f \|_{L^2(\bar\Omega)}^2 \\
&\lesssim \| \tilde f_{\bar\tau} \|_{L^2(\widetilde V)}^2.
\end{align*}
Thus we have Property (6).


\begin{bibdiv}
\begin{biblist}

\bib{bennett2006multilinear}{article}{
      author={Bennett, J.},
      author={Carbery, A.},
      author={Tao, T.},
       title={On the multilinear restriction and {Kakeya} conjectures},
        date={2006},
     journal={Acta mathematica},
      volume={196},
      number={2},
       pages={261\ndash 302},
}

\bib{bourga1991besicovitch}{article}{
      author={Bourgain, J.},
       title={Besicovitch type maximal operators and applications to {Fourier}
  analysis},
        date={1991},
     journal={Geometric and Functional analysis},
      volume={1},
      number={2},
       pages={147\ndash 187},
}

\bib{bourgain2000harmonic}{article}{
      author={Bourgain, J.},
       title={Harmonic analysis and combinatorics: how much may they contribute
  to each other},
organization={Amer. Math. Soc.},
        date={2000},
     journal={Mathematics: Frontiers and Perspectives},
       pages={13\ndash 32},
}

\bib{bourgain2011bounds}{article}{
      author={Bourgain, J.},
      author={Guth, L.},
       title={Bounds on oscillatory integral operators based on multilinear
  estimates},
        date={2011},
     journal={Geometric and Functional Analysis},
      volume={21},
      number={6},
       pages={1239\ndash 1295},
}

\bib{cordoba1982geometric}{inproceedings}{
      author={C{\'o}rdoba, A.},
       title={Geometric fourier analysis},
        date={1982},
   booktitle={Annales de l'institut fourier},
      volume={32},
       pages={215\ndash 226},
}

\bib{guth2010endpoint}{article}{
      author={Guth, L.},
       title={The endpoint case of the {Bennett}--{Carbery}--{Tao} multilinear
  {Kakeya} conjecture},
        date={2010},
     journal={Acta mathematica},
      volume={205},
      number={2},
       pages={263\ndash 286},
}

\bib{guth2015restriction}{article}{
      author={Guth, L.},
       title={A restriction estimate using polynomial partitioning},
        date={2015},
     journal={Journal of the American Mathematical Society},
       pages={http://dx.doi.org/10.1090/jams827},
}

\bib{guth2015short}{article}{
      author={Guth, L.},
       title={A short proof of the multilinear {Kakeya} inequality},
organization={Cambridge Univ Press},
        date={2015},
     journal={Mathematical Proceedings of the Cambridge Philosophical Society},
      volume={158},
      number={1},
       pages={147\ndash 153},
}

\bib{guth2010erdos}{article}{
      author={Guth, L.},
      author={Katz, N.},
       title={On the {Erd\"os} distinct distance problem in the plane},
        date={2015},
     journal={Annals of Mathematics},
      volume={181},
      number={2},
       pages={155\ndash 190},
}

\bib{kaplan2012simple}{article}{
      author={Kaplan, H.},
      author={Matou{\v{s}}ek, J.},
      author={Sharir, M.},
       title={Simple proofs of classical theorems in discrete geometry via the
  {Guth--Katz} polynomial partitioning technique},
        date={2012},
     journal={Discrete \& Computational Geometry},
      volume={48},
      number={3},
       pages={499\ndash 517},
}

\bib{lee2006bilinear}{article}{
      author={Lee, S.},
       title={Bilinear restriction estimates for surfaces with curvatures of
  different signs},
        date={2006},
     journal={Transactions of the American Mathematical Society},
      volume={358},
      number={8},
       pages={3511\ndash 3533},
}

\bib{moyua1999restriction}{article}{
      author={Moyua, A.},
      author={Vargas, A.},
      author={Vega, L.},
       title={Restriction theorems and maximal operators related to oscillatory
  integrals in {$\mathbb R^3$}},
        date={1999},
     journal={Duke mathematical journal},
      volume={96},
      number={3},
       pages={547\ndash 574},
}

\bib{stein1979some}{inproceedings}{
      author={Stein, E.M.},
       title={Some problems in harmonic analysis},
        date={1979},
   booktitle={Harmonic analysis in euclidean spaces},
       pages={3\ndash 20},
}

\bib{stein1986oscillatory}{inproceedings}{
      author={Stein, E.M.},
       title={Oscillatory integrals in {Fourier} analysis},
        date={1986},
   booktitle={Beijing lectures in harmonic analysis},
       pages={307\ndash 355},
}

\bib{tao2001rotating}{article}{
      author={Tao, T.},
       title={From rotating needles to stability of waves: Emerging connections
  between combinatorics, analysis and {PDE}},
        date={2001},
     journal={Notices of the AMS},
      volume={48},
      number={3},
       pages={294\ndash 303},
}

\bib{tao2003sharp}{article}{
      author={Tao, T.},
       title={A sharp bilinear restriction estimate for paraboloids},
        date={2003},
     journal={Geometric and functional analysis},
      volume={13},
      number={6},
       pages={1359\ndash 1384},
}

\bib{tao2000bilinearI}{article}{
      author={Tao, T.},
      author={Vargas, A.},
       title={A bilinear approach to cone multipliers {I}. {Restriction}
  estimates},
        date={2000},
     journal={Geometric and functional analysis},
      volume={10},
      number={1},
       pages={185\ndash 215},
}

\bib{tao1998bilinear}{article}{
      author={Tao, T.},
      author={Vargas, A.},
      author={Vega, L.},
       title={A bilinear approach to the restriction and {Kakeya} conjectures},
        date={1998},
     journal={Journal of the American Mathematical Society},
      volume={11},
      number={4},
       pages={967\ndash 1000},
}

\bib{vargas2005restriction}{article}{
      author={Vargas, A.},
       title={Restriction theorems for a surface with negative curvature},
        date={2005},
     journal={Mathematische Zeitschrift},
      volume={249},
      number={1},
       pages={97\ndash 111},
}

\bib{wolff2001sharp}{article}{
      author={Wolff, T.},
       title={A sharp bilinear cone restriction estimate},
        date={2001},
     journal={Annals of Mathematics},
      volume={153},
      number={3},
       pages={661\ndash 698},
}

\end{biblist}
\end{bibdiv}

\end{document}